\newtheorem{thm}{Theorem}[section]
\newtheorem{lem}[thm]{Lemma}
\newtheorem{assu-nota}[thm]{Assumption--Notation}
\theoremstyle{definition}
\newtheorem{defn}[thm]{Definition}
\newtheorem{rem}[thm]{Remark}
\newtheorem{assu}[thm]{Assumption}
\newtheorem{nota}[thm]{Notation}
\newcommand{\inv}{^{-1}}
\newcommand{\C}{\mathbb C}
\newcommand{\Z}{\mathbb Z}
\newcommand{\Q}{\mathbb Q}
\newcommand{\pp}{\mathbb P}
\newcommand{\sing}{_{\rm sing}}
\newcommand{\OO}{\mathcal O}
\newcommand{\cE}{\mathcal E}
\newcommand{\homc}{{\mathcal Hom}}
\newcommand{\ext}{{\mathcal Ext}}
\DeclareMathOperator{\Def}{Def}
\DeclareMathOperator{\Ext}{Ext}
\DeclareMathOperator{\Aut}{Aut}
\DeclareMathOperator{\Pic}{Pic}
\DeclareMathOperator{\im}{Im}
\DeclareMathOperator{\Jac}{Jac}
\numberwithin{equation}{section}
\title{Smoothing  semi-smooth stable Godeaux surfaces}
\author{Barbara Fantechi, Marco Franciosi and   Rita Pardini}
\begin{document}

\begin{abstract}
We show that all the semi-smooth stable complex Godeaux surfaces, classified in \cite{FPR18}, are smoothable, and that the moduli stack is smooth of the expected dimension 8 at the corresponding points. 
\smallskip

\noindent{\em 2020 Mathematics Subject Classification:} 14J10, 14D15, 14J29.

\noindent{\bf Keywords:} stable surface, semi-smooth surface, Godeaux surface, deformation,  smoothing.
\end{abstract}

\maketitle

\setcounter{tocdepth}{2}
\tableofcontents

\section{Introduction}

A {\em Godeaux surface} is (the canonical model of) a minimal complex surface of general type with $K^2=1$ and $h^1(\mathcal O)=h^2(\mathcal O)=0$. 
A {\em stable Godeaux surface} is a stable surface with the same numerical invariants; it is {\em semi-smooth} if it has only double crossings and pinch points as singularities, see \S \ref{ssec: semi-smooth}.

The algebraic fundamental group of a Godeaux surface is cyclic of order $m\le 5$ (\cite{Mi75}). Almost fifty years have passed since Reid's seminal paper  \cite{reid76} classifying Godeaux surfaces when $m\ge 3$,   but a classification in the simply connected case is still lacking,  in spite of much work on the subject (see Section \ref{Godeaux_history} for a recap of known facts on Godeaux surfaces and their moduli). In particular, the question of irreducibility has not been decided yet.

An approach to investigate dimension and singularities of the moduli is to construct non-canonical (i.e., having worse than canonical singularities) stable surfaces $X$ and show that they admit a smoothing:  the first such construction  can be found  in \cite[\S~7]{Lee07}, but many more examples are known nowadays (see,  for instance, \cite[\S~5]{Stern-Urzua}). When the singularities are non-isolated, this is technically more difficult and sometimes no smoothing exists, even for hypersurface singularities and $H^2(T_X)=0$ (see \cite{rollenske16}).

In  \cite{surfdef} we have obtained  deformation theoretical results that allow us to treat here the case of non-normal semi-smooth Godeaux surfaces; these are described explicitly in the classification of non-canonical stable Gorenstein surfaces in \cite{FPR18}. 

We verify for such surfaces the assumptions of Tziolas's formal smoothability criterion (Theorem 12.5 in \cite{tziolas}).

 \begin{thm}\label{thm: cohomological computation} 
Let $X$ be a  stable non-normal  semi-smooth Godeaux surface. Then 
\begin{itemize}
\item[{\rm(A)}] $\mathcal T^1_X$ is generated by global sections;
\item[{\rm(B)}] $H^1(X,\mathcal T^1_X)=0$;
\item[{\rm(C)}] $H^2(X, T_X)=0$.
\end{itemize}
\end{thm}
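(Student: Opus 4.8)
\emph{Setup and reduction to the normalization.} Let $\nu\colon\ol X\to X$ be the normalization. Since $X$ is semi-smooth, $\ol X$ is smooth, the conductor $\ol D\subset\ol X$ (with reduced structure) is a smooth curve, and $\ol D\to D$ is a double cover of the double locus $D\subset X$, ramified exactly over the pinch points. Because double crossings and pinch points are hypersurface singularities, $X$ is locally a complete intersection away from a finite set, so $\mathcal T^q_X=0$ for $q\ge 2$ and $\mathcal T^1_X$ is supported on $D$. The plan is to transfer all three statements to the smooth surface $\ol X$ and the curves $\ol D,D$, using the structural results of \cite{surfdef}: that $\mathcal T^1_X$ is (the pushforward to $X$ of) an invertible sheaf $\cL$ on $D$, and that $T_X=\ker\bigl(\nu_*T_{\ol X}(-\log\ol D)\to\mathcal C\bigr)$, where $\mathcal C$ is supported on $D$ and records the gluing, i.e. the failure of a log vector field to be compatible with the involution on $\ol D$. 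The pinch points enter as corrections to both descriptions.

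\emph{Proof of (A) and (B).} Both statements concern the invertible sheaf $\cL=\mathcal T^1_X$ on the (possibly reducible) double curve $D$. I would first compute its degree on each component, reading off $\ol D^2$, the genus of $\ol D$, and the number of pinch points from the explicit classification in \cite{FPR18}. The relevant positivity is forced by stability: since $\nu^*\omega_X\cong\omega_{\ol X}(\ol D)$ is ample, the adjunction $(K_{\ol X}+\ol D)|_{\ol D}=K_{\ol D}$ has positive degree on every component. Granting a bound of the form $\deg_{D_i}\cL\ge 2p_a(D_i)$, global generation (A) follows, while $\deg_{D_i}\cL\ge 2p_a(D_i)-1$ already yields $H^1(X,\mathcal T^1_X)=0$, i.e.\ (B); for reducible $D$ one peels off components and controls the linking terms by the same estimates.

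\emph{Proof of (C).} By Gorenstein Serre duality on $X$ (valid since $X$ is stable Gorenstein, hence $\omega_X$ is invertible and $X$ is Cohen--Macaulay), $H^2(X,T_X)^\vee\cong\Hom_X(T_X,\omega_X)$, so (C) is equivalent to the absence of nonzero sheaf maps $T_X\to\omega_X$. Pulling such a map back along $\nu$, using $\nu^*\omega_X\cong\omega_{\ol X}(\ol D)$ and the description of $T_X$ above, I would reduce (C) to the vanishing $H^0\bigl(\ol X,\Omega^1_{\ol X}(\log\ol D)\otimes\omega_{\ol X}\bigr)=0$. By the residue sequence $0\to\Omega^1_{\ol X}\to\Omega^1_{\ol X}(\log\ol D)\to\cO_{\ol D}\to0$, twisted by $\omega_{\ol X}$, this follows from $H^0(\ol X,\Omega^1_{\ol X}\otimes\omega_{\ol X})=0$ and $H^0(\ol D,\omega_{\ol X}|_{\ol D})=0$; both are read off from the explicit pair $(\ol X,\ol D)$ in \cite{FPR18}, the boundary term vanishing because $K_{\ol X}\cdot\ol D_i<0$ on each component.

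\emph{Main obstacle.} The delicate point is the pinch points: there $\nu$ is ramified along $\ol D$ and the sheets are not separated, so neither the identification of $\mathcal T^1_X$ with an honest line bundle nor the clean log-tangent description of $T_X$ holds naively, and one must incorporate the ramification and involution corrections of \cite{surfdef}, then check that the degree of $\cL$ stays large enough for (A) and (B) and that the gluing contributions on $\ol D$ do not obstruct (C). A secondary difficulty is that the argument is not fully uniform: pinning down $\ol D^2$, $p_a(\ol D)$, the number of pinch points, and the vanishing $H^0(\ol X,\Omega^1_{\ol X}\otimes\omega_{\ol X})=0$ requires running through the finite list of semi-smooth surfaces in \cite{FPR18}, so the real labour lies in the case analysis guaranteeing the positivity and vanishing statements in every case.
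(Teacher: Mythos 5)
Your outline for (A) and (B) is essentially the paper's argument (Lemma \ref{lem: T1}): $\mathcal T^1_X$ is a line bundle on the non-reduced singular scheme, with torsion part $\C_{y_1}\oplus\C_{y_2}$ at the two pinch points and restriction $\cL$ to the double curve $Y$, and both claims reduce to the degree of $\cL$ on $Y$. But the step you leave as ``granting a bound $\deg\cL\ge 2p_a$'' is the actual content, and it is not forced by stability: ampleness of $\omega_{\ol X}(\ol D)$ only controls $\deg K_{\ol D}$, not $\deg\cL$. One needs the formula of \cite[Thm.~5.5]{surfdef}, $g^*\cL\cong g^*L^{\otimes 2}\otimes N_{\bar Y|\bar X}\otimes\iota^*N_{\bar Y|\bar X}$, which with $\bar Y^2=3$ and two branch points gives $\deg\cL=5$ on the elliptic curve $Y$. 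Also, no case analysis is needed: by \cite{FPR18} there is a single family ($\bar X=S^2E$, $\bar Y$ smooth of genus $2$, $Y$ elliptic, two pinch points), so this half is incomplete but repairable exactly along your lines.

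The gap in (C) is genuine, and the reduction you propose fails for a structural reason. Serre duality $H^2(X,T_X)^\vee\cong\Hom_X(T_X,\omega_X)$ is fine, but your target vanishing $H^0(\ol X,\Omega^1_{\ol X}(\log\ol D)\otimes\omega_{\ol X})=0$ points the wrong way logically: by duality for the finite map $\nu$, that group is $\Hom_X(\nu_*T_{\ol X}(-\log\ol D),\omega_X)$, and $T_X$ is a \emph{proper subsheaf} of $\nu_*T_{\ol X}(-\log\ol D)$ (cut out by the condition that the restriction to $\ol D$ descend through the involution). Restriction therefore injects the group you compute into $\Hom_X(T_X,\omega_X)$, not conversely, and homomorphisms defined on $T_X$ need not extend: already at a double crossing point $\{xy=0\}$, the functional dual to the vector field $\partial_z$ along the double curve does not extend to the sheaf of pairs of log fields. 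The inclusion that does hold is $\Hom_X(T_X,\omega_X)\hookrightarrow\Hom_X(\nu_*T_{\ol X}(-\ol D),\omega_X)=H^0(\ol X,\Omega^1_{\ol X}\otimes\omega_{\ol X}(\ol D))$, but this target is \emph{nonzero}: twisting $0\to\OO_{\ol X}\to\Omega^1_{\ol X}\to\omega_{\ol X}\to 0$ by $K_{\ol X}+\ol D\equiv h$ shows it contains $H^0(\OO_{\ol X}(K_{\ol X}+\ol D))\neq 0$, since $\chi(\OO_{\ol X}(K_{\ol X}+\ol D))=1$ and $h^2(\OO_{\ol X}(K_{\ol X}+\ol D))=h^0(\OO_{\ol X}(-\ol D))=0$. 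So no vanishing computed purely on $\ol X$ can prove (C); the gluing data must enter. This is exactly what the paper does: using the sequences of \cite[Thm.~5.1]{surfdef} it reduces (C) to showing that the restriction map $H^1(T_{\bar Y})^+\to H^1(T_{\bar X}|_{\bar Y})$ between two $2$-dimensional spaces is an isomorphism, i.e.\ to the transversality $\im\delta\cap H^1(T_{\bar Y})^+=0$ of Lemma \ref{lem: key}, which is proved by locating $\im\delta$ inside $H^1(T_J)=Z\otimes Z$ via the differential of the Torelli map and comparing with the eigenspace decomposition of $H^0(K_{\bar Y})$ (Lemmas \ref{lem: delta-gamma}--\ref{lem: WZ}). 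That equality-of-dimensions statement is invisible to any positivity or vanishing argument of the kind you sketch, so the essential idea for (C) is missing.
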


We prove Theorem \ref{thm: cohomological computation}  in Section \ref{main section}. 
The proof combines the explicit classification of the relevant surfaces as push-outs of their normalizations from \cite{FPR18} 
(cf.~Section \ref{sec: godeaux}) and the computation of $T_X$ and $\mathcal T^1_X$ for a semi-smooth variety $X$, again in terms of its construction as push-out, carried out in \cite{surfdef}. 
We find the proof of (C) (Section \ref{ssec: TX}) particularly interesting as it exploits the interplay between maps in cohomology and their geometrical interpretations. 

For stable surfaces formal smoothability is equivalent to geometric smoothability (see Section \ref{smoothability conditions} for details); thus the Theorem above has consequences on moduli:

\begin{thm}\label{thm: main}
Let  $X$ be a  non-normal  stable semi-smooth Godeaux surface. \begin{enumerate} 
\item The moduli stack of stable surfaces is nonsingular at $[X]$;
\item The general point of  the unique irreducible component $\overline M_X$ containing $[X]$  corresponds to a nonsingular surface;
\item $\overline M_X$ has (expected) dimension $8$.
\end{enumerate}
\end{thm}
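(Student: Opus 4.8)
The plan is to let Theorem \ref{thm: cohomological computation} supply the hypotheses of Tziolas's criterion and then read off the three assertions from deformation theory. First I would record that, being semi-smooth, $X$ has only double crossings and pinch points, both of which are hypersurface singularities; hence $X$ is locally complete intersection (in particular Gorenstein, so no $\Q$-Gorenstein subtlety clouds the picture and the KSBA deformations of $X$ are the honest flat ones), and the cotangent sheaves $\mathcal T^q_X=\ext^q(L_X,\mathcal O_X)$ satisfy $\mathcal T^0_X=T_X$, $\mathcal T^q_X=0$ for $q\ge 2$, with $\mathcal T^1_X$ supported on the non-normal locus. The deformations of $X$ as a stable surface are controlled by $\mathbb T^i_X=\Ext^i(L_X,\mathcal O_X)$, the tangent space $\mathbb T^1_X$ and obstruction space $\mathbb T^2_X$ of the moduli stack at $[X]$. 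The local-to-global spectral sequence $E_2^{p,q}=H^p(X,\mathcal T^q_X)\Rightarrow\mathbb T^{p+q}_X$ feeds $\mathbb T^2_X$ only through $H^2(X,T_X)$, $H^1(X,\mathcal T^1_X)$ and $H^0(X,\mathcal T^2_X)=0$; parts (C) and (B) annihilate the first two, so $\mathbb T^2_X=0$. Thus the deformation functor is unobstructed and the moduli stack is nonsingular at $[X]$, which is assertion (1).

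For (2) I would note that unobstructedness makes the formal deformation space of $X$ smooth, hence irreducible, so $[X]$ lies on a single irreducible component $\overline M_X$. Assertion (A) is precisely what upgrades smoothness of the base to an actual smoothing: since $\mathcal T^1_X$ is globally generated and supported on the singular locus, Tziolas's Theorem 12.5 in \cite{tziolas} produces a first-order deformation smoothing every singular point at once, which $\mathbb T^2_X=0$ lets us integrate to a one-parameter formal smoothing; by the equivalence of formal and geometric smoothability for stable surfaces (Section \ref{smoothability conditions}) this yields a genuine family with smooth general fibre. As smoothness of fibres is open on $\overline M_X$ and the component is irreducible, its general point corresponds to a nonsingular surface.

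For (3), since $\mathbb T^2_X=0$ the dimension of the stack at $[X]$ equals $\dim\mathbb T^1_X-\dim\mathbb T^0_X$, and $\mathbb T^0_X=H^0(X,T_X)=0$ because a stable surface has finite automorphism group; hence $\dim\overline M_X=\dim\mathbb T^1_X=-\chi(\mathbb T^\bullet_X)$. The Euler characteristic $\chi\bigl(\rR\Hom(L_X,\mathcal O_X)\bigr)$ is locally constant in the smoothing family (the relative cotangent complex is perfect for lci fibres), so it equals $\chi(T_S)$ for a smooth general fibre $S$. Riemann--Roch on $S$ gives $-\chi(T_S)=10\chi(\mathcal O_S)-2K_S^2$, and since a stable Godeaux surface has $\chi(\mathcal O)=K^2=1$ these invariants are preserved along the family, yielding $\dim\overline M_X=10-2=8$.

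The step I expect to be most delicate is not any of the three conclusions on their own—granted Theorem \ref{thm: cohomological computation}, the bookkeeping above is essentially formal—but rather the two structural inputs that make the bookkeeping legitimate: identifying the deformation theory of the KSBA moduli stack at $[X]$ with $\Ext^\bullet(L_X,\mathcal O_X)$, and justifying the deformation-invariance of $\chi(\mathbb T^\bullet)$ across a family whose special fibre is singular. Here the Gorenstein/lci nature of semi-smooth surfaces and the equivalence of formal and geometric smoothability recorded in Section \ref{smoothability conditions} are doing the real work.
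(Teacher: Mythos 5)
Your parts (1) and (2) follow the paper's own route: Theorem \ref{thm: cohomological computation} verifies the hypotheses of Theorem \ref{thm: tziolas}, the identification of the obstruction space with $\Ext^2(\Omega_X,\OO_X)$ and its vanishing via the local-to-global spectral sequence is exactly Remarks \ref{rem: unobstructed} and \ref{rem: smooth}, and Theorem \ref{thm: nobile} (applicable since $\omega_X$ is ample, or $H^2(\OO_X)=0$) converts the formal smoothing into a geometric one. For part (3), however, you take a genuinely different path. The paper never leaves $X$: it writes $\dim\Ext^1(\Omega_X,\OO_X)=h^1(T_X)+h^0(\mathcal T^1_X)$, takes $h^0(\mathcal T^1_X)=7$ from Lemma \ref{lem: T1}, and obtains $h^1(T_X)=-\chi(T_X)=1$ (using $h^0(T_X)=0$ by finiteness of $\Aut(X)$) from the push-out sequences \eqref{eq: seq1} and \eqref{eq: seq2} together with Lemma \ref{lem: cohomology}. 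You instead compute on the smooth fibre: since $\mathbb{T}^0_X=\mathbb{T}^2_X=0$, the dimension equals $-\chi\bigl(\Ext^\bullet(\Omega_X,\OO_X)\bigr)$, which you transport across the smoothing family using perfectness of the relative cotangent complex for families with complete intersection fibres and cohomology-and-base-change, and then evaluate by Riemann--Roch on a smooth fibre $S$ as $10\chi(\OO_S)-2K_S^2=8$. Both computations are correct, and they agree numerically: $0-8+0=-8=\chi(T_S)$. Your argument is more general and explains the adjective ``expected'': any geometrically smoothable stable surface with complete intersection singularities and $\mathbb{T}^0=\mathbb{T}^2=0$ lies on a component of dimension $10\chi(\OO)-2K^2$, with no need for the specific values $7$ and $1$. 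What it costs is the extra structural input on constancy of the Euler characteristic of the derived dual of the cotangent complex in flat proper families (together with the shrinking of the base needed so that all fibres are complete intersections) --- standard but nontrivial, and rightly flagged by you as the delicate step --- and it loses the finer information the paper's computation yields, namely the decomposition $8=1+7$ into locally trivial first-order deformations and global sections of $\mathcal T^1_X$.
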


\begin{rem} The fundamental groups of  semi-smooth non-normal Godeaux surfaces range over all the groups $\Z_m$, $m\le 5$;  by semicontinuity of the fundamental group in families (cf Proposition 4.5 of  \cite{FPR18}), it follows that for $m\ge 3$ the semi-smooth Godeaux surfaces with $\pi_1=\Z_m$ can be smoothed to Godeaux surfaces with the same fundamental group. For $m=1,2$ it is possible that the general surface in the same component of the moduli space  has larger fundamental group; however by Theorem \ref{thm: main} each semi-smooth non-normal Godeaux surface lies in exactly one  component, and this component has  the expected dimension 8. 
\end{rem}

We expect that the techniques developed in this paper can be extended to other singular stable surfaces, in particular the surfaces mentioned in Remark \ref{remark: non semi-smooth}.

\subsection{Godeaux surfaces and their moduli}\label{Godeaux_history}
We give here some context on Godeaux surfaces,  with the purpose of better  framing our results and methods.
\smallskip

Godeaux surfaces have been an object of intense study  over the last decades,  but a complete classification and  understanding of their moduli have not yet been achieved.
As we recalled above, the algebraic fundamental group $\pi_1^{\rm alg}$ of a Godeaux surface is cyclic of order $m\le 5$;   it is a folklore conjecture that   for each value of $m\le 5$ the connected component of the moduli of   Godeaux surfaces with $\pi_1^{\rm alg}=\Z_m$  is irreducible  and rational of dimension 8.  
This conjecture is known to be true for $m\ge 3$ by  \cite{reid76} (see also \cite{CU18}) and for $m=2$ by the recent preprint \cite{DR20}. 

Many ``sporadic'' examples  of Godeaux surfaces with trivial  $\pi_1^{\rm alg}$  are   known  (\cite{barlow2},
 \cite{CrGa94}, \cite{Lee07} and \cite{Stern-Urzua}) and an irreducible component of dimension 8  of the moduli space has very recently  been constructed in \cite{Schreyer-Stenger} by homological algebra methods, but  the the geometry of the moduli space is still mysterious. 
  For instance,  it is  known  (\cite[Theorem 0.31]{CP00}) that the local moduli space of 
 the surface  in \cite{CrGa94}   is smooth of dimension 8, but in general one has no clue to  which among the various examples belong to the same component of the moduli. 
 
 The most classical   approach to the construction (and eventually the classification) of Godeaux surfaces  with $\pi_1^{\rm alg}=\Z_m$  goes back to \cite{reid76} and  consists in writing down  the canonical ring of the universal cover of the  surface, keeping track of the $\Z_m$-action.  Clearly this method is ineffective when $m=1$,  and different techniques have been used in order to produce examples with trivial $\pi_1^{\rm alg}$.  
 One   method (cf.~\cite[\S~7]{Lee07}  and also \cite{Stern-Urzua}) consists in constructing  a normal surface with rational singularities and showing  that it admits a $\Q$-Gorenstein smoothing  to a simply connected Godeaux surface.  Namely, instead of constructing the Godeaux surface directly, one produces a surface in the boundary of the moduli space of stable surfaces with $K^2=1$ and $h^1(\OO)=h^2(\OO)=0$ and then prove its smoothability by deformation theoretical arguments. In this paper we apply  this approach to non-normal surfaces. 
 
 Our starting point is the   systematic analysis  of a  part of the boundary of the moduli  of stable Godeaux surfaces  carried out in \cite{FPR18}, where all the non-canonical   Gorenstein stable Godeaux surfaces   have been classified explicitly. 
 The question whether these surfaces actually belong to the closure of the moduli space of smooth Godeaux surfaces is partially answered in \cite{FPR18} and \cite{franciosi-rollenske},
 \cite{rollenske16}, but the smoothability of some of the non-normal examples is still to be decided.

 In  Theorem \ref{thm: main} we answer this question in the affirmative  for  semi-smooth surfaces; furthermore we show that the moduli stack is smooth of dimension 8      at the corresponding points,  as predicted by the folklore  conjecture above. In particular, the moduli stack is locally irreducible near these points: so while it is possible that  the closures of connected components of the moduli of canonical Godeaux with  different fundamental groups meet at the boundary, this does not happen at the points corresponding to semi-smooth stable  surfaces.

\subsection*{Acknowledgements}

We wish to thank Alessandro Nobile, Gian Pietro Pirola and Nikolaos Tziolas  for useful mathematical discussions. 

This article is based upon work supported by the National Science Foundation under
 Grant No. 1440140, while the first and third author were in residence at the Mathematical Sciences Research Institute in Berkeley, California, 
 during the spring semester of 2019.  
 This collaboration  started during the workshops ``Derived Algebraic Geometry and Birational Geometry and Moduli Spaces'' and
  ``Connections for Women: Derived Algebraic Geometry'',  January 2019, MSRI Berkeley. 

This project was partially supported by the projects PRIN  2015EYPTSB$\_$ 010 ``Geometry of Algebraic Varieties" and PRIN 
 2017SSNZAW$\_$004 ``Moduli Theory and Birational Classification"  of Italian MIUR.
 All authors are members of GNSAGA of INDAM.

\section{Smoothability conditions}\label{smoothability conditions}

The local analysis of the moduli stack of stable surfaces relies on the study of deformations of a stable surface.
We recall here the smoothability criterion we will use.

\begin{assu} In this section 
$X$ is  a proper, pure dimensional complex variety with complete intersection singularities (cf. \cite{surfdef}, Definition 2.2).
\end{assu}

We are interested in the existence of a geometric smoothing. 
\begin{defn}\label{geom smoothing} 
A {\em geometric smoothing} of $X$ is a Cartesian diagram 
\[
\begin{CD}
X @>{\iota}>>\mathcal X \\
@VVV @VV{\pi}V\\
c@>>> C.
\end{CD}
\]
where $C$ is a smooth irreducible curve, $c \in C$  is a closed point and $\pi$ is a flat and proper morphism, such that $\pi$ is generically smooth.
We say that $X $ is {\em geometrically smoothable} if it has a geometric smoothing.
 \end{defn} 

We denote by $T_X:=\homc(\Omega_X,\mathcal O_X)$  the  tangent sheaf of $X$ and by
$\mathcal T^1_X$ the sheaf $\ext^1(\Omega_X,\mathcal O_X)$.
The key theorem we use is the following result of Tziolas guaranteeing the existence of a {\em formal smoothing} (Definition 11.6 of \cite{tziolas}). 
 \begin{thm}\label{thm: tziolas} (\cite{tziolas}, Theorem 12.5). If the following conditions hold
\begin{itemize}
\item[{\rm(A)}] $\mathcal T^1_X$ is generated by global sections;
\item[{\rm(B)}] $H^1(X,\mathcal T^1_X)=0$;
\item[{\rm(C)}] $H^2(X, T_X)=0$.
\end{itemize}
Then X is formally smoothable, i.e. it admits a formal smoothing.
\end{thm}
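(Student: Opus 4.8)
The statement is a criterion in deformation theory; my plan is to show that the deformations of $X$ are unobstructed, so that the base of its versal deformation is smooth, and then that a general deformation over this smooth base smooths every singularity at once. Write $T^i_X$ for the cotangent cohomology groups controlling the functor $\Def_X$, so that $T^1_X$ is its tangent space and $T^2_X$ its obstruction space. As $X$ has complete intersection singularities its cotangent complex lies in degrees $[-1,0]$, hence $\mathcal{T}^q_X=0$ for $q\ge 2$ and the local-to-global spectral sequence $E_2^{p,q}=H^p(X,\mathcal{T}^q_X)\Rightarrow T^{p+q}_X$ has only the rows $q=0$ (with $\mathcal{T}^0_X=T_X$) and $q=1$, the sole nonzero differential being $d_2\colon H^p(X,\mathcal{T}^1_X)\to H^{p+2}(X,T_X)$. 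Reading off $T^2_X$ gives a short exact sequence
$$0\to\coker\!\big(H^0(\mathcal{T}^1_X)\xrightarrow{d_2}H^2(T_X)\big)\to T^2_X\to\ker\!\big(H^1(\mathcal{T}^1_X)\xrightarrow{d_2}H^3(T_X)\big)\to 0,$$
whose outer terms vanish by (C) and (B) respectively; thus $T^2_X=0$ and the base $B$ of the (semi)universal deformation of $X$ is smooth. Reading off $T^1_X$, and again using (C) to kill $d_2$ on $H^0$, gives the exact sequence
$$0\to H^1(X,T_X)\to T^1_X\xrightarrow{\ \rho\ }H^0(X,\mathcal{T}^1_X)\to 0,$$
so the localization map $\rho$, sending a first-order deformation to the induced first-order deformations of the singularities, is surjective.

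It then remains to produce, over the smooth base $B$, a deformation whose general fibre is smooth; equivalently, the discriminant locus in $B$ — where the fibre acquires a singularity — must be a proper closed subset. Since $X$ has only lci singularities, every germ $(X,x)$ is smoothable, the sheaf $\mathcal{T}^1_X$ encodes their first-order deformations, and the smoothing directions form a dense open condition at each point of $X\sing$. Condition (A), global generation of $\mathcal{T}^1_X$, together with the surjectivity of $\rho$, is exactly what ensures that the global deformations induce, at every point of the singular locus, enough local deformations to reach these smoothing directions. Granting the uniform statement that a general deformation in $B$ is a smoothing at \emph{every} point of $X\sing$ simultaneously, its general fibre is smooth near $X\sing$, hence smooth everywhere; restricting $B$ to a general formal arc through the origin then yields a one-parameter formal smoothing of $X$.

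The homological hypotheses (B) and (C) enter only through the clean spectral-sequence computation of the first paragraph. The crux, I expect, lies in the uniform statement invoked in the second: upgrading the pointwise information — global generation of $\mathcal{T}^1_X$ and surjectivity of $\rho$ — into a single family that smooths along the entire, possibly positive-dimensional, singular locus. This calls for a Bertini-type general-position argument in the finite-dimensional space $H^0(X,\mathcal{T}^1_X)$, controlling fibrewise the bad (non-smoothing) directions and using that for lci germs these form a proper closed cone inside $\mathcal{T}^1_{X,x}$; one must also establish that the smoothing locus is open in the smooth base, so that a general arc genuinely leaves the discriminant. I expect this interplay between the global-generation hypothesis and the local openness of smoothability — rather than the vanishing statements (B), (C) — to be the main obstacle.
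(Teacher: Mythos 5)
This theorem is not proved in the paper at all: it is Tziolas's result, quoted from \cite{tziolas} (Theorem 12.5) and used as a black box, so the only comparison available is with Tziolas's own argument. The part of your proposal that does appear in the paper is your first paragraph, which reproduces Remark \ref{rem: unobstructed}: for $X$ reduced with complete intersection singularities one has $\mathbb L_X\simeq \Omega_X$ and $\ext^2(\Omega_X,\OO_X)=0$, so the local-to-global spectral sequence together with (B) and (C) kills the obstruction space $\Ext^2(\Omega_X,\OO_X)$, and the same sequence with (C) gives surjectivity of the localization map $\Ext^1(\Omega_X,\OO_X)\to H^0(\mathcal T^1_X)$. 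That much is correct.

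The rest, however, contains a genuine gap, and it is not a routine one: the ``uniform statement'' you grant yourself \emph{is} the theorem, and your outline of how to obtain it would not go through as written. Concretely: (a) since $X\sing$ is positive-dimensional, the germ $(X,x)$ has no finite-dimensional semiuniversal deformation, and an element of the fibre $\mathcal T^1_X\otimes k(x)$ is not a deformation of that germ; smoothability of a non-isolated singularity is not a first-order notion, so ``the smoothing directions form a dense open cone inside $\mathcal T^1_{X,x}$'' is not a statement one can make precise and then dispose of by a Bertini argument in $H^0(\mathcal T^1_X)$. (b) The conclusion is a \emph{formal} smoothing: over $\Spf\C[[t]]$ there is no geometric generic fibre, so ``the discriminant locus in $B$ is a proper closed subset'' and ``a general arc leaves it'' have no direct meaning; one must first define smoothness of the generic fibre formally (Definition 11.6 of \cite{tziolas} --- roughly, a power of $t$ must kill the locus where the family fails to be smooth) and then build the formal family order by order so that this condition holds. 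That inductive construction, in which (A) and (B) are used to choose and lift sections of $\mathcal T^1_X$ along the whole singular locus, is the actual content of Tziolas's proof and is absent from yours. The danger is real, not hypothetical: the paper's introduction, citing \cite{rollenske16}, recalls that stable surfaces with non-isolated hypersurface singularities and $H^2(T_X)=0$ can fail to be smoothable, so the step you defer is precisely where the hypotheses (A) and (B) must do work beyond the cohomological bookkeeping of your first paragraph. You have correctly located the difficulty, but locating it is not overcoming it.
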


Every geometric smoothing induces a formal smoothing, but the converse is in general not true. However, in our case the existence of the formal smoothing is sufficient, in view of the following result.

\begin{thm}(\cite{nobile}\label{thm: nobile})
If one of the following conditions holds:
\begin{enumerate}
\item $H^2(X,\OO_X)=0$,
\item either the dualizing sheaf $\omega_X$ or its dual $\omega_X^{\vee} $ is ample.
\end{enumerate}
Then $X$ is formally smoothable if and only if it is geometrically smoothable.
\end{thm}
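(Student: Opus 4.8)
The direction ``geometric $\Rightarrow$ formal'' is immediate: completing the family $\pi\colon\mathcal X\to C$ along the central fiber $X$ produces a flat formal deformation over $\Spf\widehat{\mathcal O}_{C,c}\iso\Spf\C[[t]]$, and the smooth generic fiber of $\pi$ forces the smoothing property. So the real content is the converse, and the plan is to \emph{algebraize} a given formal smoothing to an honest flat proper family over a curve germ. The essential obstacle is the absence of a relatively ample invertible sheaf on the formal family: once such a polarization is available, effectivity of the formal deformation (i.e. its descent from a formal scheme over $\Spf\C[[t]]$ to a scheme over $\Spec\C[[t]]$) follows from Grothendieck's formal existence theorem. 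The two hypotheses are precisely two mechanisms for producing this polarization. Concretely, a formal smoothing is a compatible system of flat deformations $X_n\to\Spec\C[t]/(t^{n+1})$ with $X_0=X$; all the $X_n$ share the underlying space of $X$, so a formal line bundle on $\widehat{\mathcal X}$ is relatively ample exactly when its restriction to $X$ is ample. The task thus reduces to lifting an ample sheaf from $X$ to $\widehat{\mathcal X}$.

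Under hypothesis (1), fix an ample line bundle $L$ on $X$ (the $X$ we care about, being stable surfaces, are projective). Each thickening $X_n\into X_{n+1}$ is a square-zero extension whose ideal is $\iso\mathcal O_X$ by flatness, and the truncated-exponential sequence $0\to\mathcal O_X\to\mathcal O_{X_{n+1}}^*\to\mathcal O_{X_n}^*\to1$ yields
\[
\Pic(X_{n+1})\lra\Pic(X_n)\xrightarrow{\ \delta\ }H^2(X,\mathcal O_X).
\]
Since $H^2(X,\mathcal O_X)=0$, the class $\delta$ vanishes and every line bundle on $X_n$ lifts to $X_{n+1}$; inductively $L$ lifts to a compatible system, i.e. to a formal line bundle $\widehat L$ on $\widehat{\mathcal X}$, which is relatively ample by the remark above.

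Under hypothesis (2) no cohomological input is needed. Because $X$ has complete intersection (hence Gorenstein) singularities and the deformation is flat, the relative dualizing sheaf $\omega_{\widehat{\mathcal X}/\Spf\C[[t]]}$ is an invertible formal sheaf restricting to $\omega_X$ on the central fiber, and it is defined canonically on $\widehat{\mathcal X}$. If $\omega_X$ (resp. $\omega_X^\vee$) is ample, then $\omega_{\widehat{\mathcal X}/\Spf\C[[t]]}$ (resp. its dual) is relatively ample, again by fiberwise ampleness; note that ampleness of $\omega_X$ already guarantees that $X$ is projective.

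In either case $\widehat{\mathcal X}$ carries a relatively ample formal invertible sheaf, so by Grothendieck's formal existence theorem (EGA~III) it is the completion of a projective flat scheme $\mathcal X_{[[t]]}\to\Spec\C[[t]]$ with special fiber $X$. This family is of finite presentation over $\C[[t]]$, so by a standard spreading-out argument (EGA~IV, \S8), or equivalently by Artin approximation, it descends to a flat projective family over a smooth affine curve germ $(C,c)$ with fiber $X$ over $c$. The smoothing property of the formal deformation says that its generic fiber, over $\C((t))$, is smooth; since smoothness of the generic fiber is an open condition preserved under spreading out, the resulting algebraic family is generically smooth, and hence a geometric smoothing. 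I expect the genuinely delicate points to be the two polarization steps: the lifting of $L$ in case (1), which rests on the flatness identification of the square-zero ideals with $\mathcal O_X$ and on the vanishing $H^2(\mathcal O_X)=0$, and the invertibility and base-change compatibility of the relative dualizing sheaf in case (2), which rests on the complete-intersection hypothesis. By contrast, once a polarization is in hand the effectivization and the final spreading-out are formal.
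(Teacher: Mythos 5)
There is no proof in the paper to compare yours against: Theorem \ref{thm: nobile} is quoted from Nobile's Ph.D.\ thesis \cite{nobile}, listed as ``in preparation'', and the paper uses it as a black box. Judged on its own terms, your proposal follows the route one would expect such a proof to take: geometric $\Rightarrow$ formal is completion along the central fiber; for the converse you polarize the formal family --- in case (1) by lifting an ample line bundle through the square-zero thickenings, the obstruction living in $H^2(X,\OO_X)=0$, and in case (2) by taking the relative dualizing sheaf, invertible because complete intersection singularities are Gorenstein --- then algebraize by Grothendieck's existence theorem and spread out from $\Spec\C[[t]]$ to a smooth curve germ using openness of the smooth locus in the base. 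Both polarization mechanisms are set up correctly. One caveat: the theorem as stated assumes only that $X$ is proper, and your case (1) needs an ample bundle on $X$ to get started, i.e.\ projectivity; this is harmless for the paper's application to stable surfaces, but it is an extra hypothesis in your argument (in case (2) projectivity is automatic, as you note).

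The one step you gloss over in a way that hides real content is the last one. Tziolas's formal smoothing (Definition 11.6 of \cite{tziolas}) is \emph{not} defined by smoothness of a generic fiber --- a formal scheme over $\Spf\C[[t]]$ has no generic fiber --- but by the requirement that the relative singular locus (equivalently, the relative $\mathcal T^1$) of the compatible system $\{X_n\}$ be annihilated by a fixed power $t^N$ independent of $n$. So your sentence ``the smoothing property of the formal deformation says that its generic fiber, over $\C((t))$, is smooth'' is not a restatement of the definition; it requires an argument after algebraization. Concretely: once the family $\mathcal X\to\Spec\C[[t]]$ exists, let $Z\subset\mathcal X$ be the relative singular locus, cut out by the appropriate Fitting ideal of $\Omega_{\mathcal X/\C[[t]]}$; it is proper over $\C[[t]]$, its $t$-adic completion is the formal singular locus and is killed by $t^N$, and since completion is faithful on coherent sheaves of a scheme proper over $\Spec\C[[t]]$ (formal GAGA), $Z$ itself is killed by $t^N$, hence disjoint from the generic fiber, which is therefore smooth. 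With this inserted, and with the routine bookkeeping in the spreading-out step (the special fiber sits over a $\C$-point, smoothness of fibers is open on the base, normalize the chosen curve), your outline becomes a complete proof.
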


\begin{rem}\label{rem: unobstructed}
 In general the assumptions (B) and (C) of Theorem \ref{thm: tziolas} imply that $X$ has unobstructed deformations.  By  \cite[Prop.~2.1.2.3]{illusie},  an obstruction space for deformations is given by $\Ext^2(\mathbb L_X,\mathcal O_X)$: since $X$  has complete intersection singularities and is reduced, the cotangent complex $\mathbb L_X$ of $X$ is equivalent to $\Omega_X$ in the derived category  (cf. also \cite[\S 2]{surfdef}). Thus $\Ext^2(\Omega_X,\mathcal O_X)$ is an obstruction space  for $X$. Since $\ext^2(\Omega_X,\mathcal O_X)=0$ because $X$ has complete intersection singularities, the result follows by the local to global spectral sequence of $\Ext$.
 \end{rem}

Recall that a proper surface  $X$ is {\em stable}  if it has semi-log canonical singularities (see \cite{KSB}, Section 4 and in particular Definition 4.17) and $K_X$ is ample as a $\Q$-Cartier divisor. 
\begin{rem} \label{rem: smooth}
By Remark \ref{rem: unobstructed},  if $X$ is a stable surface that satisfies conditions (B) and (C) of Theorem \ref{thm: tziolas}, then the moduli stack  of stable surfaces  is smooth at $[X]$ of dimension equal to $\dim \Ext^1(\Omega_X,\OO_X)$.
\end{rem}

\section{Semi-smooth stable Godeaux surfaces}\label{sec: godeaux}

\subsection{Semi-smooth surfaces} \label{ssec: semi-smooth}

We recall that a surface is {\em semi-smooth}  if it is  locally \'etale isomorphic to $\{u^2-v^2w=0\}$ (``double crossings points''); the singular points corresponding to the origin are called {\em pinch points} and  the remaining singular points are {\em double crossings points} (see e.g. Def.~4.1 in \cite{KSB}).

Semi-smooth surfaces have smooth normalization and the preimage  of the singular  locus via the normalization map is a smooth curve; more precisely, by \cite[Prop.~3.11]{surfdef} any quasi-projective semi-smooth surface $X$ can be obtained as follows. Let $\bar X$ be a smooth surface, $\bar Y\subset \bar X$ a smooth surface and  $g\colon \bar Y\to Y$ a double cover with $Y$ smooth: then  $X$ fits in the following push-out diagram:
\begin{equation}\label{eq: diag0}
\begin{CD}
\bar Y @>g>>Y\\
@V {\bar \jmath} VV @VV j V\\
\bar X@>f >>X
\end{CD}
\end{equation}
The maps  $\bar j$ and $j$ are closed embeddings and  $f$ is finite and birational (and so $f$ is the normalization map). The singular locus of $X$ is $Y$ and  the pinch points  are the images of the branch points of $g$.  One sometimes says that $X$ is obtained from $\bar X$ by gluing/pinching along $\bar Y$ via $g$ and writes  $X:=\bar X\sqcup_{\bar Y}Y$.
\medskip 

\subsection{Semi-smooth Godeaux surfaces} \label{ssec: godeaux}

 We call {\em stable Godeaux surface} a stable surface with $K^2=1$ and $h^1(\mathcal O)=h^2(\mathcal O)=0$.   
 The 
stable non-canonical (i.e.,  with  worse than canonical singularities) Gorenstein Godeaux surfaces  have been completely classfied in  \cite{FPR18}: in particular,    the  semi-smooth ones  are of type $(E_+)$, namely their normalization is the symmetric product of an elliptic curve. Here we  recall briefly  the  construction of these surfaces and set the notation. 

Fix an elliptic curve $E$,  let $P\in E$ be the origin and let  $\bar X=S^2E$ be the second  symmetric product of $E$.  The addition map $E\times E\to E$ induces the Albanese map  $\pi\colon \bar X \to E$. The map $\pi$ gives $\bar X$ the structure of a $\pp^1$-bundle over $E$: in fact we have $\bar X=\pp_E(\cE)$, where $\cE$ is the only non-trivial extension:
\begin{equation}\label{eq: extension}
0\to \OO_E\to \cE\to \OO_E(P)\to 0.
\end{equation}
We denote by $h$ the numerical equivalence  class of $\OO_{\pp_E(\cE)}(1)$ and by $F$ that  of a   fiber of $\pi$; the images in $\bar X$ of the ``coordinate curves'' $\{Q\}\times E$ are smooth curves of genus $1$ representing $h$. One has $h^2=hF=1$.

We let $\bar Y\subset \bar X$ be a  smooth curve of class $3h-F$. Since $K_{\bar X}$ is numerically equivalent to  $-2h+F$, we have  ${\bar Y}^2=3$, $K_{\bar X}\bar Y=-1$, so $\bar Y$ has genus $2$. We assume in addition that $\bar Y$ admits an involution $\iota$ with quotient a smooth curve $Y$ of genus 1 and we denote by $g\colon \bar Y\to Y$ the quotient map. The  existence and classification of such $(\bar Y,\iota)$ has been established in \cite{FPR18} and \cite{pd-elliptic}. So we can define  $X:=\bar X\sqcup_{\bar Y}Y$ as  the semi-smooth surface obtained by pinching $\bar X$ along $\bar Y$ via $g$.  By the Hurwitz formula the branch locus of $g$ consists of  two points, so $X$ has two pinch points. 

The line bundle $\omega_X=\OO_X(K_X)$ is ample, $K^2_X=1$ and $h^i(\OO_X)=0$ for $i>0$, namely $X$ is a stable Godeaux surface.    
By \cite{FPR18} all semi-smooth non-normal stable Godeaux surfaces arise in this way.
\begin{rem}\label{remark: non semi-smooth}
In fact, the construction  of type $(E_+)$ Godeaux surfaces in \cite{FPR18} includes also non semi-smooth surfaces, for special choices of the curve $E$. 
Assume the curve $E$ admits an endomorphism of degree 2. In this case   $\bar X$ contains a curve $\bar Y$ of class $3h-F$ that decomposes as $Y_1\cup Y_2$, where $Y_1$ has class $h$,  $Y_2$ has class $2h-F$, $Y_1$ and $Y_2$  meet transversely at one point $R$ and are both isomorphic to $E$. One can take $\iota$ to be an involution of $\bar Y$ that exchanges  $Y_1$ and $Y_2$ leaving $R$ fixed and set $X:=\bar X\sqcup_{\bar Y}Y$ also in this case. The surface $X$ is again a Gorenstein stable Godeaux surface but it has worse singularities, since it has a degenerate cusp at the image point of $R$. In this case our methods do not allow us to prove directly the smoothability of $X$. However, $X$ can be obtained as a limit of non-normal semi-smooth Godeaux surfaces, and so it is  smoothable too, but we don't know whether the moduli space is irreducible at $[X]$. \end{rem}

\section{Proof of Theorem   \ref{thm: cohomological computation} and  \ref{thm: main} }\label{main section}

\begin{nota}\label{nota: godeaux} 
We keep  the notation of \S \ref{ssec: godeaux}. In addition, we denote by $\bar y_i$, $i=1,2$ the fixed points of $\iota$ on $\bar Y$ and we set $y_i=g(\bar y_i)\in Y$. The action of $\iota$ induces a  decomposition into eigenspaces   $g_*\OO_{\bar Y}=\OO_Y\oplus L\inv$, where $L$ is a line bundle. The multiplication map of $g_*\OO_{\bar Y}$ induces an isomorphism  $L^{\otimes 2}\cong \OO_Y(B)$, where $B:=y_1+y_2$ is the branch locus of $g$. 
\end{nota}

\subsection{The sheaf $\mathcal T^1_X$}

The points $y_1$ and $y_2$ are the pinch points of $X$; the singular scheme  $X\sing$ of $X$ is supported on $Y$ but it is non reduced, since  it has embedded  points at   $y_1$ and $y_2$. Indeed,  locally in the \'etale topology $X$ is defined by the equation $h(u,v,w):= u^2-v^2w=0$, and  $X\sing$ is the scheme defined by the vanishing of $h$ and its derivatives, namely by   $u=vw=v^2=0$ (see \cite{surfdef}, \S 2.1).
The sheaf $\mathcal T_X^1$ is a line bundle on $X\sing$ whose restriction to $Y$ we denote by $\mathcal L$. So we have a short exact sequence:
\begin{equation}\label{eq: ext-seq}
0\to \mathcal K\to \mathcal T_X^1\to \mathcal L\to 0
\end{equation}
where $\mathcal K$ is isomorphic to $\C_{y_1}\oplus \C_{y_2}$.
Now  we show how  in this case  claims  ({\rm A}) and  ({\rm B}) of Theorem \ref{thm: cohomological computation} follow easily from Theorem  5.5 of  \cite{surfdef}:
\begin{lem}\label{lem: T1}
 \begin{enumerate}
\item  $h^0( \mathcal T_X^1)=7$ and $ \mathcal T_X^1$ is generated by global sections;
 \item $h^1( \mathcal T_X^1)=0$.
 \end{enumerate}
\end{lem}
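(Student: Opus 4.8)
The plan is to compute the cohomology of $\mathcal T^1_X$ by working on the one-dimensional scheme $X\sing$ and reducing everything to the smooth genus-$1$ curve $Y$ via the short exact sequence \eqref{eq: ext-seq}. First I would identify the line bundle $\mathcal L$ on $Y$ explicitly. Since $\mathcal T^1_X$ measures first-order deformations of the singularity and is computed in \cite{surfdef}, Theorem 5.5, in terms of the push-out data $(\bar X, \bar Y, g)$, I expect $\mathcal L$ to be expressible through the normal bundle $N_{\bar Y/\bar X}$ and the eigenbundle decomposition $g_*\OO_{\bar Y}=\OO_Y\oplus L\inv$ from Notation \ref{nota: godeaux}. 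Concretely, the $\mathcal T^1$ of a double-crossing locus glued via $g$ should be the descent to $Y$ of the normal-bundle data twisted appropriately; the upshot I would aim for is a formula of the shape $\mathcal L\cong (\text{invariant part of }g_*N_{\bar Y/\bar X})\otimes L^{\otimes a}$ for the correct power $a$, giving a concrete line bundle on the elliptic curve $Y$ whose degree I can read off.

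Once $\deg\mathcal L$ is pinned down, the computation is routine Riemann--Roch on $Y$. I would compute $\bar Y^2 = (3h-F)^2 = 3$ (already noted in \S\ref{ssec: godeaux}), so $\deg N_{\bar Y/\bar X}=3$; after accounting for the genus-$2$ to genus-$1$ quotient $g$ and the two branch points $B=y_1+y_2$, I expect $\deg\mathcal L = 5$ on the genus-$1$ curve $Y$, so that $h^0(Y,\mathcal L)=5$ and $h^1(Y,\mathcal L)=0$. Combining this with the skyscraper kernel $\mathcal K\cong\C_{y_1}\oplus\C_{y_2}$, which contributes $h^0(\mathcal K)=2$ and $h^1(\mathcal K)=0$, the long exact cohomology sequence of \eqref{eq: ext-seq} gives $h^0(\mathcal T^1_X)=2+5=7$ and $h^1(\mathcal T^1_X)=0$, establishing the numerology in both parts of the lemma.

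For the global generation statement in part (i) I would argue in two stages. Away from the pinch points, on $Y\setminus\{y_1,y_2\}$, global generation of $\mathcal T^1_X$ follows from global generation of the line bundle $\mathcal L$ on $Y$: a line bundle of degree $5$ on an elliptic curve is very ample, hence globally generated, so it suffices to check that the restriction map $H^0(X,\mathcal T^1_X)\to H^0(Y,\mathcal L)$ hits enough sections, which is guaranteed by the surjectivity coming from $H^1(\mathcal K)=0$. At the two pinch points I must separately produce sections of $\mathcal T^1_X$ that do not vanish there, i.e.\ sections mapping onto the skyscraper summands $\C_{y_i}$; these again exist because the connecting map $H^0(\mathcal L)\to H^1(\mathcal K)=0$ vanishes, so the whole of $H^0(\mathcal K)$ lifts to $H^0(\mathcal T^1_X)$ and the sheaf is generated at $y_1,y_2$ as well.

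The main obstacle I anticipate is the first step: correctly extracting the isomorphism class of $\mathcal L$ (and the precise structure of $\mathcal T^1_X$ at the embedded points) from the push-out description in \cite{surfdef}, since the twist by $L$ and the behavior at the branch points of $g$ are exactly where an off-by-one in the degree would break the count. Everything downstream — the Riemann--Roch computation, the vanishing of $h^1$, and the global generation — is then formal, so I would invest the care in getting $\deg\mathcal L=5$ and the local generators at $y_1,y_2$ exactly right.
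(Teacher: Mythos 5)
Your overall route is the paper's: take cohomology of \eqref{eq: ext-seq}, use $H^i(\mathcal K)=0$ for $i>0$ to reduce both counts to the line bundle $\mathcal L$ on the elliptic curve $Y$, extract $\deg \mathcal L$ from \cite[Thm.~5.5]{surfdef}, and finish by Riemann--Roch on $Y$. The formula in \emph{loc.~cit.} is $g^*\mathcal L\cong g^*L^{\otimes 2}\otimes N_{\bar Y|\bar X}\otimes \iota^* N_{\bar Y|\bar X}$ (a pull-back formula, not the invariant-pushforward shape you guessed, though you correctly flagged this as the point to pin down); its degree is $4+3+3=10$ on $\bar Y$, so $\deg\mathcal L=5$ as you predicted, and the counts $h^0(\mathcal T^1_X)=2+5=7$, $h^1(\mathcal T^1_X)=h^1(\mathcal L)=0$ come out exactly as in the paper.

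The genuine problem is your stage-two argument for global generation at the pinch points, which is backwards and would fail as written. Locally at $y_i$ one has $\OO_{X\sing}\cong\C[v,w]/(v^2,vw)$ and $\mathcal T^1_X\cong \OO_{X\sing}$, with $\mathcal K$ the ideal generated by $v$; since $v$ lies in the maximal ideal $\mathfrak m_{y_i}$, the subsheaf $\mathcal K$ maps to \emph{zero} in the fiber $\mathcal T^1_X\otimes k(y_i)$. So a section "mapping onto the skyscraper summand $\C_{y_i}$" does vanish at $y_i$ in the fiber sense, and by Nakayama such sections contribute nothing to generation of the stalk there. (Relatedly, $H^0(\mathcal K)$ needs no lifting --- $\mathcal K$ is a subsheaf of $\mathcal T^1_X$; what the vanishing $H^1(\mathcal K)=0$ buys is surjectivity of $H^0(\mathcal T^1_X)\to H^0(\mathcal L)$.) The correct argument, which is the paper's and uses only ingredients you already have, is that the surjection $\mathcal T^1_X\to\mathcal L$ induces an isomorphism on fibers at $y_i$ (both fibers are $1$-dimensional, because $y_i\in Y$ and both sheaves are line bundles on their respective supports); hence, by Nakayama, global generation of $\mathcal T^1_X$ at \emph{every} point, including the pinch points, follows from global generation of $\mathcal L$ (automatic for a degree-$5$ line bundle on an elliptic curve) together with the surjectivity of $H^0(\mathcal T^1_X)\to H^0(\mathcal L)$. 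In short: the embedded points are handled by the quotient $\mathcal L$, not by the skyscraper $\mathcal K$, whose only role is to add $2$ to $h^0$.
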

\begin{proof}
One has  $H^i(\mathcal K)=0$ for $i>0$ because $\mathcal K$ has support of dimension zero, so taking global sections in  \eqref{eq: ext-seq} gives an exact sequence:
$$0\to H^0(\mathcal K)\to H^0(\mathcal T_X^1)\to H^0(\mathcal L)\to 0$$ 
and an isomorphism $H^1(\mathcal T_X^1)\cong H^1(\mathcal L)$.
It follows  that   $\mathcal T_X^1$ is generated by global sections if and only if $\mathcal L$ is.  By \cite{surfdef}, Theorem  5.5, there is an isomorphism $g^*\mathcal L\cong g^*L^{\otimes 2}\otimes N_{ \bar Y|\bar X}\otimes \iota^* N_{ \bar Y|\bar X}$; since $\deg N_{ \bar Y|\bar X}=3$, we have  $\deg g^*\mathcal L= 10 $ and therefore $\deg \mathcal L= 5$.  Since $Y$ is an  elliptic curve, then 
   $h^1(\mathcal L)=0$, 
   $\mathcal L$ is generated by global sections and   $h^0(\mathcal L)=5$, hence    $\mathcal T_X^1$ is generated by global section and  $h^0(\mathcal T_X^1)=5+2=7$, proving (i)
and (ii).

\end{proof}
\subsection{The sheaf $T_X$} \label{ssec: TX}
The proof of  claim (C) of Theorem \ref{thm: cohomological computation} also relies on the results of \cite{surfdef}, but is far more involved than the proof of   ({\rm A}) and  ({\rm B}). 

We start with some standard computations:

\begin{lem}\label{lem: cohomology}
\begin{enumerate}
\item $h^0(\bar X, T_{\bar X})=h^1(\bar X, T_{\bar X})=1$, $h^2(\bar X, T_{\bar X})=0$, 
\item $h^0(T_{\bar X}|_{\bar Y})=1$, $h^1(T_{\bar X}|_{\bar Y})=2$.
\end{enumerate}
\end{lem}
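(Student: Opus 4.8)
The plan is to prove Lemma \ref{lem: cohomology} by two essentially independent computations, one on $\bar X=S^2E$ and one on the restriction to $\bar Y$, using the $\pp^1$-bundle structure $\pi\colon \bar X\to E$ throughout.

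For part (i), I would exploit the relative tangent sequence of the bundle $\pi\colon \bar X=\pp_E(\cE)\to E$,
\begin{equation}\label{eq: rel-tangent}
0\to T_{\bar X/E}\to T_{\bar X}\to \pi^*T_E\to 0.
\end{equation}
Since $E$ is elliptic, $T_E\iso \OO_E$, so $\pi^*T_E\iso\OO_{\bar X}$ and the relative tangent bundle $T_{\bar X/E}$ is the relative $\OO(2)$ twisted by $\pi^*(\det\cE)^{\vee}$. The cohomology of $T_{\bar X/E}$ is then computed by pushing forward to $E$ via $\pi$ (using $R^0\pi_*T_{\bar X/E}=\sym^2\cE\otimes(\det\cE)^{\vee}$, with $R^1\pi_*=0$ since the fibers are $\pp^1$) and feeding the result into the Leray spectral sequence; here the nontrivial extension \eqref{eq: extension} is what makes $h^0(T_{\bar X})=1$ rather than larger. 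Taking the long exact sequence of \eqref{eq: rel-tangent} in cohomology, combining $H^\bullet(\bar X,\OO_{\bar X})$ (which is $H^\bullet(E,\OO_E)$ by $\pp^1$-bundle, so $h^0=h^1=1$) with $H^\bullet(\bar X,T_{\bar X/E})$, should yield the three stated values; in particular $h^2(T_{\bar X})=0$ follows because $\bar X$ is a surface fibered in rational curves over a curve, so $H^2$ of anything pushing forward nicely vanishes. I would double-check the $h^1(T_{\bar X})=1$ value, which encodes the one-dimensional family of deformations of $S^2E$ coming from deforming $E$.

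For part (ii), I would restrict $T_{\bar X}$ to the genus-$2$ curve $\bar Y$ and compute its cohomology directly, either by restricting the sequence \eqref{eq: rel-tangent} to $\bar Y$ or via Riemann--Roch on $\bar Y$ once I know the degree. The restriction $\pi^*T_E|_{\bar Y}\iso\OO_{\bar Y}$ contributes $h^0=h^1=1$ (genus $2$ but the trivial bundle is special: $h^0(\OO_{\bar Y})=1$, $h^1(\OO_{\bar Y})=2$), and $T_{\bar X/E}|_{\bar Y}$ is a line bundle on $\bar Y$ whose degree I would read off from $\deg(T_{\bar X/E})\cdot(3h-F)$ using $h^2=hF=1$, $F^2=0$. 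The expected answer $h^0=1$, $h^1=2$ should then drop out of the restricted long exact sequence together with the genus-$2$ Serre duality bookkeeping.

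The main obstacle I anticipate is not any single cohomology group but getting the push-forward $\pi_*T_{\bar X/E}=\sym^2\cE\otimes(\det\cE)^{\vee}$ and its cohomology exactly right, since the symmetric square of the non-split extension \eqref{eq: extension} is a rank-$3$ bundle whose $H^0$ and $H^1$ must be computed carefully (the self-extension structure is precisely what forces the moduli of $S^2E$ to be one-dimensional). A secondary subtlety is correctly identifying which edge maps in the two long exact sequences are zero versus isomorphisms; I would pin these down by matching against the geometric expectation that $\bar X$ has a one-parameter deformation space and no obstructions. Everything else is routine Riemann--Roch and Leray bookkeeping.
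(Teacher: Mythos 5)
Your part (i) is sound and is essentially the paper's argument in disguise: the relative tangent sequence you write is exactly the dual of the relative differentials sequence used in the paper, where $T_{\bar X/E}$ is identified with $\OO_{\bar X}(-K_{\bar X})$ and the vanishing $h^i(\OO_{\bar X}(-K_{\bar X}))=0$ is quoted from Catanese--Ciliberto. Your alternative route through $\pi_*T_{\bar X/E}=\sym^2\cE\otimes(\det\cE)^{\vee}$ does work, and the computation you defer can be closed cleanly: $\sym^2\cE\otimes(\det\cE)^{\vee}\iso\End_0(\cE)$, and since $\cE$ is stable (hence simple) and $K_E$ is trivial, both $h^0$ and $h^1$ of this bundle vanish, giving $H^i(\bar X,T_{\bar X})\iso H^i(\bar X,\OO_{\bar X})$ for all $i$.

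The genuine gap is in part (ii). The sub-line bundle $M:=T_{\bar X/E}|_{\bar Y}$ has degree $(2h-F)\cdot(3h-F)=1$ on the genus-$2$ curve $\bar Y$, so $\chi(M)=0$, and Riemann--Roch together with Serre duality tells you \emph{nothing} about $h^0(M)$: a degree-$1$ line bundle on a genus-$2$ curve has $h^0=1$ if it is effective and $h^0=0$ otherwise, and both cases occur. This ambiguity propagates to the answer: chasing your restricted sequence $0\to M\to T_{\bar X}|_{\bar Y}\to\OO_{\bar Y}\to 0$, if $h^0(M)=0$ you get $(h^0,h^1)=(1,2)$ as claimed, but if $h^0(M)=1$ (hence $h^1(M)=1$) and the coboundary $H^0(\OO_{\bar Y})\to H^1(M)$ vanished, you would get $(2,3)$. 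So part (ii) reduces precisely to proving $h^0(M)=0$, a non-effectivity statement that does not ``drop out'' of bookkeeping on $\bar Y$; it needs input from the surface. Within your own framework the fix is: restrict via $0\to\OO_{\bar X}(-K_{\bar X}-\bar Y)\to\OO_{\bar X}(-K_{\bar X})\to M\to 0$, note $-K_{\bar X}-\bar Y\equiv -h$ is anti-ample so its $H^0$ and $H^1$ vanish by Kodaira, and conclude $h^0(M)=h^0(\bar X,T_{\bar X/E})=0$ from your part (i). This is, in substance, what the paper does instead: it twists $T_{\bar X}$ by $-\bar Y$, kills $H^0$ and $H^1$ of $T_{\bar X}(-\bar Y)$ by Kodaira vanishing, computes $h^2(T_{\bar X}(-\bar Y))=1$ by Riemann--Roch, and then uses $0\to T_{\bar X}(-\bar Y)\to T_{\bar X}\to T_{\bar X}|_{\bar Y}\to 0$. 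Finally, your proposed tiebreaker for the edge maps --- ``matching against the geometric expectation'' that $\bar X$ has a one-dimensional unobstructed deformation space --- is not a proof: that expectation only constrains $H^1(T_{\bar X})$ and $H^2(T_{\bar X})$, not the cohomology of the restriction $T_{\bar X}|_{\bar Y}$, and in this paper the lemma is itself an input to the deformation-theoretic conclusions, so arguing from them would be circular.
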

\begin{proof} (i) 
Taking the dual of the relative differentials sequence for the Albanese morphism $\pi\colon \bar X\to E$ 
$$0\to\pi^*\omega_E=\OO_{\bar X}\to \Omega^1_{\bar X}\to \omega_{\bar X|E}=\omega_{\bar X}\to 0, $$
one gets
\begin{equation}\label{eq: TX}
0\to \OO_{\bar X}(-K_{\bar X})\to T_{\bar X}\to \OO_{\bar X}\to 0
\end{equation}
We have $h^i(-K_{\bar X})=0$ for all $i$ (\cite{catanese-ciliberto} \S 2, (5)), hence the long cohomology sequence associated with \eqref{eq: TX} gives isomorphisms $H^i(\bar X, T_{\bar X})\cong H^i(\bar X,\OO_{\bar X})$ for every $i$, hence the claim.

\smallskip
(ii) Twisting \eqref{eq: TX} by $\OO_{\bar X}(-\bar Y)$ we get:
\begin{equation}
0\to \OO_{\bar X}(-C)\to T_{\bar X}(-\bar Y) \to \OO_{\bar X}(-\bar Y)\to 0,
\end{equation}\label{eq: TXY}
where $C$ is a divisor  in the numerical class  $h$ (recall that $K_{\bar X}$ is numerically equivalent to $-2h+F$). Since both  $C$ and $\bar Y$ are ample by \cite{Hartshorne}, Prop. V.2.21, by Kodaira vanishing the long exact sequence associated with \eqref{eq: TXY} gives $H^0(T_{\bar X}(-\bar Y))=H^1(T_{\bar X}(-\bar Y))=0$ and a short exact sequence 
$$0\to H^2(\bar X,\OO_{\bar X}(-C))\to H^2(\bar X, T_{\bar X}(-\bar Y))\to H^2(\bar X,\OO_{\bar X}(-\bar Y))\to 0.$$
By Riemann-Roch and Kodaira vanishing we have $h^2(\bar X,\OO_{\bar X}(-C))=\chi(\OO_{\bar X}(-C))=0$ and $h^2(\bar X,\OO_{\bar X}(-\bar Y))=\chi(\OO_{\bar X}(-\bar Y))=1$, and so $h^2(\bar X,T_{\bar X}(-\bar Y))=1$.

Consider now the sequence:
\begin{equation}
0\to T_{\bar X}(-\bar Y)\to T_{\bar X}\to T_{\bar X}|_{\bar Y}\to 0.
\end{equation}

By the previous computations,  taking cohomology one gets an  isomorphism $H^0(\bar X, T_{\bar X})\cong  H^0( \bar X, T_{\bar X}|_{\bar Y})$ and an exact sequence:
$$0\to H^1(\bar X, T_{\bar X})\to H^1( \bar X, T_{\bar X}|_{\bar Y})\to H^2(\bar X, T_{\bar X}(-\bar Y))\to 0.$$ 
Therefore by (i) we have   $ h^0( \bar X, T_{\bar X}|_{\bar Y})=1$ and  $ h^1( \bar X, T_{\bar X}|_{\bar Y})=h^1(\bar X, T_{\bar X})+h^2(\bar X, T_{\bar X}(-\bar Y))=1+1=2$.
\end{proof}

The next step is an analysis of $H^0(N_{\bar Y|\bar X})$. Since $\bar Y^2=3$  and $\bar Y$ has genus 2, by Riemann-Roch this is a $2$-dimensional vector space.  Consider the cohomology sequences:
\begin{equation}\label{eq: seq3}
0\to\OO_{\bar X} \to  \OO_{\bar X}(\bar Y)\to N_{\bar Y|\bar X}\to 0
\end{equation}
and
 \begin{equation}\label{eq: seq4}
 0\to T_{\bar Y}\to T_{\bar X}|_{\bar Y}\to N_{\bar Y|\bar X}\to 0
\end{equation}
and let $\gamma\colon H^0(N_{\bar Y|\bar X} )\to H^1(\OO_{\bar X})$ and $\delta\colon H^0(N_{\bar Y|\bar X})\to H^1(T_{\bar Y})$ be the coboundary maps induced by \eqref{eq: seq3}, \eqref{eq: seq4} respectively.

We have:
\begin{lem}\label{lem: delta-gamma}
\begin{enumerate}
\item $\ker \gamma$ and $\ker \delta$ have dimension 1;
\item $ H^0(N_{\bar Y|\bar X})= \ker \gamma \oplus \ker \delta$.
\end{enumerate}
\end{lem}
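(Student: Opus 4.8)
My approach is to analyze the two coboundary maps $\gamma$ and $\delta$ separately, compute the dimensions of their kernels, and then show that these kernels meet only in zero (so that together they span the $2$-dimensional space $H^0(N_{\bar Y|\bar X})$). Throughout I will exploit the geometric meaning of the two exact sequences \eqref{eq: seq3} and \eqref{eq: seq4}: the sequence \eqref{eq: seq3} expresses $N_{\bar Y|\bar X}=\OO_{\bar X}(\bar Y)|_{\bar Y}$ as a quotient coming from line bundles on $\bar X$, so $\gamma$ detects whether a first-order deformation of $\bar Y$ in $\bar X$ is realized by an honest deformation of the \emph{divisor} $\bar Y$ inside the linear system $|\bar Y|$; the sequence \eqref{eq: seq4} is the normal-bundle-to-tangent-bundle sequence, so $\delta$ detects whether a section of $N_{\bar Y|\bar X}$ lifts to a first-order deformation of the abstract curve $\bar Y$.

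\textbf{Computing $\ker\gamma$.} From \eqref{eq: seq3} the kernel of $\gamma$ is the image of $H^0(\OO_{\bar X}(\bar Y))$ in $H^0(N_{\bar Y|\bar X})$, and its dimension equals $h^0(\OO_{\bar X}(\bar Y))-h^0(\OO_{\bar X})=h^0(\OO_{\bar X}(\bar Y))-1$. First I would compute $h^0(\OO_{\bar X}(\bar Y))=h^0(\OO_{\bar X}(3h-F))$ by Riemann–Roch on the ruled surface $\bar X=\pp_E(\cE)$ together with a vanishing of the higher cohomology (using ampleness of $\bar Y$ and Kodaira vanishing, or a direct pushforward computation along $\pi$). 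I expect this to give $h^0(\OO_{\bar X}(\bar Y))=2$, so that $\dim\ker\gamma=1$, matching claim (i). Equivalently, $\gamma$ has rank $1$, reflecting that the map $H^1(\OO_{\bar X})\to H^1(\OO_{\bar Y})$ is injective (as $\bar Y\to E$ has positive-degree Albanese image), so the image of $\gamma$ is $1$-dimensional and hence $\ker\gamma$ is $1$-dimensional too.

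\textbf{Computing $\ker\delta$.} From \eqref{eq: seq4}, $\ker\delta$ is the image of $H^0(T_{\bar X}|_{\bar Y})$, so $\dim\ker\delta=h^0(T_{\bar X}|_{\bar Y})-h^0(T_{\bar Y})+\dim(\text{correction from }H^0(N)\text{ surjectivity})$; more precisely $\dim\ker\delta = h^0(T_{\bar X}|_{\bar Y}) - h^0(T_{\bar Y})$ provided $H^0(T_{\bar X}|_{\bar Y})\to H^0(N_{\bar Y|\bar X})$ is injective, which it is since $H^0(T_{\bar Y})=0$ because $\bar Y$ has genus $2$. By Lemma \ref{lem: cohomology}(ii) we have $h^0(T_{\bar X}|_{\bar Y})=1$, so $\dim\ker\delta=1$, giving claim (i).

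\textbf{Showing the kernels are transverse.} The heart of the proof, and the step I expect to be the main obstacle, is part (ii): that $\ker\gamma\cap\ker\delta=0$, which (since each kernel is $1$-dimensional inside a $2$-dimensional space) is equivalent to the direct-sum decomposition. I would argue this by interpreting a nonzero element of $\ker\gamma\cap\ker\delta$ geometrically: it would be a section of $N_{\bar Y|\bar X}$ that simultaneously comes from a global section of $\OO_{\bar X}(\bar Y)$ (so it moves $\bar Y$ within its linear system, i.e.\ as an \emph{embedded} divisor keeping its class) \emph{and} lifts to a deformation of the tangent data along $\bar Y$ coming from $T_{\bar X}|_{\bar Y}$. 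The strategy is to identify the generator of $\ker\delta$ explicitly—it should come from the image of the trivial quotient $\OO_{\bar X}$ in $T_{\bar X}$ via \eqref{eq: TX}, i.e.\ from the infinitesimal automorphism/Albanese direction encoded by $h^0(T_{\bar X})=1$ in Lemma \ref{lem: cohomology}(i)—restricted to $\bar Y$, and then to check that this distinguished section is \emph{not} killed by $\gamma$. Concretely, I would chase the global vector field on $\bar X$ (the generator of $H^0(T_{\bar X})$, which is the pullback of the invariant vector field on $E$) through both sequences: it lands in $\ker\delta$ by construction, and I would show its image under $\gamma$ in $H^1(\OO_{\bar X})$ is nonzero by matching it with the nonzero class detected in the $\ker\gamma$ computation above. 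The delicate point is controlling this single comparison of one-dimensional images inside $H^1(\OO_{\bar X})\cong\C$ and $H^1(T_{\bar Y})$; I anticipate the cleanest route is a compatibility diagram relating \eqref{eq: TX}, \eqref{eq: seq3} and \eqref{eq: seq4}, where the commutativity forces the generator of $H^0(T_{\bar X})$ to restrict to a section outside $\ker\gamma$, thereby establishing transversality.
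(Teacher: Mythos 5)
Your part (i) is correct and is essentially the paper's own computation: $\ker \gamma$ is the image of $H^0(\OO_{\bar X}(\bar Y))$ in $H^0(N_{\bar Y|\bar X})$ and has dimension $h^0(\OO_{\bar X}(\bar Y))-1=1$ (Riemann--Roch plus Kodaira vanishing indeed give $h^0(\OO_{\bar X}(\bar Y))=2$), while $\ker\delta$ is the isomorphic image of $H^0(T_{\bar X}|_{\bar Y})$, of dimension $1$ by Lemma \ref{lem: cohomology}. Your idea for part (ii) is also the right mechanism, and the same one the paper uses: the generator of $\ker\delta$ is the image of the global vector field $\theta\in H^0(T_{\bar X})$ coming from the translation action of $E$ on $\bar X=S^2E$ (in the paper this appears as the tangent line $W$ at $[\bar Y]$ to the orbit $S=\{[g_P^*\bar Y]\mid P\in E\}$ inside the scheme $\mathcal P$ of curves algebraically equivalent to $\bar Y$; the orbit deforms $\bar Y$ trivially as an abstract curve, so $W\subseteq\ker\delta$, and $W=\ker\delta$ by the dimension count).

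The genuine gap is the final nonvanishing $\gamma(\theta|_{\bar Y})\neq 0$, which you propose to extract from ``a compatibility diagram relating \eqref{eq: TX}, \eqref{eq: seq3} and \eqref{eq: seq4}, where the commutativity forces'' the conclusion. No formal diagram chase can force this. The composite $H^0(T_{\bar X})\to H^0(N_{\bar Y|\bar X})\overset{\gamma}{\longrightarrow} H^1(\OO_{\bar X})$ is contraction with the class $[\bar Y]\in H^1(\Omega^1_{\bar X})$, i.e.\ it is the derivative at $P=0$ of $P\mapsto g_P^*\OO_{\bar X}(\bar Y)\in\Pic^{[\bar Y]}(\bar X)$, and whether this vanishes depends on the numerical class of $\bar Y$, not on compatibilities among exact sequences. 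Concretely, translations act on $\Pic(\bar X)$ by twisting a class $ah+bF$ by $-(a+2b)P$ (curves in class $h$ are twisted by $-P$, fibers by $-2P$ since $\pi\circ g_P=t_{2P}\circ\pi$); hence for a curve in the anticanonical class $2h-F$ the contraction is $-(2-2)P=0$, the flow of $\theta$ would move such a curve inside a fixed linear system, and the two kernels would \emph{coincide} instead of being transverse. What makes the lemma true is the specific class $\bar Y\equiv 3h-F=h+K_{\bar X}$: since $g_P^*K_{\bar X}=K_{\bar X}$, one gets $g_P^*\bar Y\in|\bar Y-P|$, so the derivative is an isomorphism of $\ker\delta$ onto $H^1(\OO_{\bar X})$ and therefore $\ker\gamma\cap\ker\delta=0$. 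This explicit computation of the translation action on $\Pic(\bar X)$ is the paper's key geometric input, and it is exactly what your argument omits.
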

\begin{proof}

 By \cite{mumford}, Lecture 15,   there is a scheme $\mathcal P$ parametrizing the  curves of $\bar X$ algebraically equivalent to $\bar Y$ and $H^0(N_{\bar Y|\bar X})$ is canonically isomorphic to the  tangent space to $\mathcal P$ at the point $[\bar Y]$.  Denote by   $\Pic^{[\bar Y]}(\bar X)$ the connected component of $\Pic(\bar X)$ containing the class of $\OO_{\bar X}({\bar Y})$  
  and let $c\colon \mathcal P\to \Pic^{[\bar Y]}(\bar X)$ be the characteristic map,  that sends $[C]\in \mathcal P$ to the class of $\OO_{\bar X}(C)$.
  Since  the numerical class of $\bar Y$  is  $3h-F=h+K_{\bar X}$ and $h$ is ample, by Riemann-Roch and Kodaira vanishing we have $h^0(\OO_{\bar X}(C))=2$ for any curve algebraically equivalent to $\bar Y$, so the map $c$ gives $\mathcal P$  the structure of  a  $\pp^1$-bundle over the genus 1 curve $ \Pic^{[\bar Y]}(\bar X)$.  
  The  differential of $c$ at $[\bar Y]$ is $\gamma$ and  the long cohomology sequence coming from \eqref{eq: seq3} shows that $\ker \gamma$ is the image of $H^0(\OO_{\bar X}(\bar Y))\to H^0(N_{\bar Y| \bar X})$ and has dimension 1. 
  
The diagonal action of $E$ on $E\times E$ by translation descends to an  action on $\bar X=S^2E$. We denote by $g_P$ the automorphism of $\bar X$ induced by translation by a point  $P\in E$; the automorphism  $g_P$ acts on the curves in the numerical class of $h$ as twisting by $P$, where we regard $P$ as an element of $E=\Pic^0(\bar X)$.   Since   $\bar Y$  is numerically equivalent to $h+K_{\bar X}$, we have  
 that $g_P^*\bar Y$ is in the linear system $|\bar Y-P|$.  So  $S:=\{[g_P^*\bar Y]\ | \ P\in E\}\subset \mathcal P$ is a section of the $\pp^1$-bundle $\mathcal P$ and its tangent space $W$ at $[\bar Y]$ is a 1-dimensional subspace of $H^0(N_{\bar Y|\bar X})$ that is mapped  isomorphically to $H^1(\OO_{\bar X})$. We claim that $W=\ker \delta$. 

Indeed, the elements of $H^0(N_{\bar Y|\bar X})$ are the first order deformations of $\bar Y\subset \bar X$ and they are mapped by $\delta$ to the corresponding deformation of $\bar Y$. Since $S$ gives a trivial deformation of $\bar Y$, it is clear that $W$ is contained in $\ker \delta$. 
To finish the proof it is enough to observe that $\ker \delta$ has dimension 1: this follows from the long exact sequence associated with  \eqref{eq: seq4}, since 
$h^1(N_{\bar Y|\bar X})=0$ because  $\bar Y^2=3$ and $\bar Y$ has genus 2, $h^1(T_{\bar Y})=3$ and, by Lemma \ref{lem: cohomology}, $h^1(T_{\bar X|\bar Y})=2$. \end{proof}

The next step is an analysis of $H^0(K_{\bar Y})$. We start with a couple of general remarks. 
\begin{rem}\label{rem: jac1} Let $h\colon C_1\to C_2$ a finite morphism of curves of positive genus and write $J_i:=\Jac (C_i)$, $i=1,2$.  Choosing base points $x_1\in C_1$ and $x_2:=h(x_1)\in C_2$, we have a  commutative diagram
$$
\begin{CD}
C_1 @>a_1>>J_1\\
@V {h} VV @VV{h_* }V\\
C_2@> > a_2>J_2
\end{CD}
$$
where $a_1$ and $a_2$  are the Abel-Jacobi maps with base points $x_1,x_2$, respectively, and $h_*$ is the morphism of abelian varieties induced by $h$.  The differential of $h_*$ at the origin is the transpose of the pull-back map $h^*\colon H^0(K_{C_2})\to H^0(K_{C_1})$, so the tangent space to $\ker h_*$ at the origin is $(h^*(H^0(K_{C_2}))^{\perp}$.
\end{rem}
\begin{rem}\label{rem: jac2}
In the situation of Remark \ref{rem: jac1}, assume in addition that $h$ does not factor through a non-trivial \'etale cover of $C_2$. This happens,  for instance, if $\deg h$ is a prime and $h$ is not \'etale.  Then the morphism  $h^*\colon J_2= \Pic^0(C_2)\to J_1=\Pic^0(C_1)$ is injective and therefore the kernel $A$ of the dual morphism 
$h_*\colon J_1\to J_2$ is  connected.  In particular, if $C_2$ has genus 1, then $A$ is a connected divisor with $A\cdot a_1(C_1)=\deg h$. 
\end{rem}

We now apply the previous remarks in our situation.
Under the action of the involution  $\iota$ induced by the double cover $g\colon \bar Y\to Y$ the vector space $V:=H^0(K_{\bar Y})$  splits as  the direct sum  $V=V^+\oplus V^-$ of  $1$-dimensional  eigenspaces, with  $V^+=g^*H^0(K_Y)$.  
Denote by $\sigma$  the hyperelliptic involution of $\bar Y$: $\sigma$ and $\iota$ generate a group isomorphic to $ \Z_2^2$ and $\sigma$ acts on $V$ as multiplication by $-1$, so $V^-$  is invariant under the action of $\iota':=\iota\circ \sigma$. So, if   $g'\colon \bar Y\to Y':=\bar Y/\iota'$ denotes   the quotient map, the curve $Y'$ has genus 1 and  
$V^-={g'}^*H^0(K_{Y'})$.

Set $Z:=V^{\vee}$; since $\iota$ acts trivially on $H^1(K_{\bar Y})$, by Serre duality  $Z^-:=(V^+)^{\perp}$, and $Z^+:= (V^-)^{\perp}$. The space $Z$ contains also  the $1$-dimensional  subspace $W:=(p^*H^0(K_E))^{\perp}$ where $p\colon \bar Y\to E$ is  the degree 3 morphism induced by  the Albanese map of $\bar X$. 

 We have the following: 
\begin{lem}\label{lem: WZ}
One has
$$Z^+\cap W=Z^-\cap W=0.$$
\end{lem}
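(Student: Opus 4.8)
The goal is to show that the three distinguished lines $Z^+$, $Z^-$, and $W$ inside the two-dimensional space $Z=H^0(K_{\bar Y})^\vee$ are in ``general position'' in the sense that $W$ meets neither $Z^+$ nor $Z^-$. Since $Z$ is two-dimensional and $W$, $Z^\pm$ are all one-dimensional, proving $Z^{\pm}\cap W=0$ is equivalent to proving $Z^{\pm}\neq W$, i.e. that $W$ is distinct from each of the two eigenlines $Z^+,Z^-$. Dualizing via Serre duality, $W=(p^*H^0(K_E))^{\perp}$, $Z^+=(V^-)^\perp$ and $Z^-=(V^+)^\perp$, so the statement is equivalent to asserting that the one-dimensional subspace $p^*H^0(K_E)\subset V=H^0(K_{\bar Y})$ coincides with neither eigenspace $V^+=g^*H^0(K_Y)$ nor $V^-={g'}^*H^0(K_{Y'})$.

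First I would reduce everything to a statement about eigenspaces. The line $p^*H^0(K_E)$ is the image of the pullback of the unique holomorphic differential on $E$ under the degree-$3$ Albanese map $p\colon\bar Y\to E$. I would check how the commuting involutions $\iota$ and $\sigma$ act on this line. The key observation is that if $p^*H^0(K_E)$ were equal to $V^+$, then the differential $p^*\omega_E$ would be $\iota$-invariant, which would force the degree-$3$ map $p$ to factor through the quotient $g\colon\bar Y\to Y$; similarly $p^*H^0(K_E)=V^-$ would force $p$ to factor through $g'\colon \bar Y\to Y'$. So the whole lemma comes down to showing that the Albanese map $p$ factors through neither of the two genus-$1$ quotients $Y=\bar Y/\iota$ and $Y'=\bar Y/\iota'$.

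The cleanest way to rule out such factorizations is through the Jacobian machinery set up in Remarks \ref{rem: jac1} and \ref{rem: jac2}. The condition $p^*H^0(K_E)\subseteq V^+=g^*H^0(K_Y)$ says precisely (taking perpendiculars) that the tangent line $W$ to $\ker(p_*\colon J(\bar Y)\to E)$ at the origin lies in the image of $\ker(g_*\colon J(\bar Y)\to J(Y))$; by Remark \ref{rem: jac2} these kernels are connected abelian subvarieties, so a containment of tangent lines would promote to a containment of the subvarieties themselves, giving a nontrivial homomorphism relating the relevant quotient elliptic curves and hence an actual factorization of $p$. Thus I would argue at the level of the three induced elliptic quotients $E$, $Y$, $Y'$ of $\bar Y$: the three corresponding elliptic subvarieties (or equivalently the three lines $W$, $Z^+$, $Z^-$) must be pairwise distinct. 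Here the explicit description of $\bar Y$ as a curve of class $3h-F$ on $\bar X=S^2E$, together with the intersection data $\bar Y^2=3$, $K_{\bar X}\bar Y=-1$ and the genus-$2$ constraint, should pin down the numerical behaviour: the map $p$ has degree $3$ whereas $g$ and $g'$ have degree $2$, and a factorization would force $3$ to be divisible by $2$, an immediate contradiction once one checks that no intermediate étale cover intervenes (which is exactly the prime-degree hypothesis of Remark \ref{rem: jac2}).

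\textbf{Main obstacle.} The delicate point is not the abstract Jacobian formalism but verifying that $p$ genuinely cannot factor through the specific quotients $g$ and $g'$, i.e. that the three elliptic quotients $E$, $Y$, $Y'$ of the genus-$2$ curve $\bar Y$ are honestly different. A genus-$2$ curve can carry several elliptic involutions, so one must use the concrete geometry: $\iota$ is the prescribed involution with quotient $Y$ of genus $1$, $\sigma$ is the hyperelliptic involution, $\iota'=\iota\circ\sigma$, and $p$ is the restriction of the Albanese map of $S^2E$. I would extract a numerical or monodromy invariant distinguishing them --- most naturally the degrees ($3$ versus $2$) combined with the observation that $p$ is not composed with $\sigma$ (since $p^*\omega_E$ is $\sigma$-anti-invariant, as $\sigma$ acts as $-1$ on all of $V$, whereas a factorization through a degree-$2$ map would impose a sign structure incompatible with $\deg p=3$). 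Pinning this comparison down rigorously, using only the classification of $(\bar Y,\iota)$ from \cite{FPR18} and \cite{pd-elliptic}, is where the real content of the proof lies.
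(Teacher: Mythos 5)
Your proposal is correct and follows essentially the same route as the paper: identify $W$, $Z^+$, $Z^-$ as the tangent spaces at the origin of the kernels of $p_*$, $g'_*$, $g_*$ on $\Jac(\bar Y)$, use Remark \ref{rem: jac2} to know these kernels are connected elliptic subvarieties, and use that a connected abelian subvariety is determined by its tangent space at the origin, so the lemma reduces to distinguishing these three subvarieties by the degrees ($3$ versus $2$) of the corresponding maps. The only difference is cosmetic: the paper gets distinctness directly from the intersection numbers of the kernels with the Abel--Jacobi image of $\bar Y$ ($3$ versus $2$, again by Remark \ref{rem: jac2}), while you upgrade an equality of kernels to a factorization of the degree-$3$ map $p$ through a degree-$2$ quotient and contradict divisibility --- the same numerical obstruction --- so your closing concern that the classification of $(\bar Y,\iota)$ from \cite{FPR18} and \cite{pd-elliptic} is needed is unfounded.
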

\begin{proof} Set $J:=\Jac(\bar Y)$ and denote by $\Theta\subset J$ the image of $\bar Y$ via the Abel-Jacobi map. 
By  definition $Z$ is the  tangent space to $J$ at the origin; by Remark \ref{rem: jac1},    $Z^-$ is the tangent space at the origin to the kernel $D$ of $g_*\colon J\to \Jac(Y)$,  $Z^+$ is the tangent space at the origin to the kernel $D'$ of $g'_*\colon J\to \Jac(Y')$, and $W$ is the tangent space to the kernel $E'$  of $p_*\colon J\to \Jac(E)$.
By Remark \ref{rem: jac2} $D$, $D'$  and $E'$ are connected and satisfy $\Theta\cdot D=\Theta\cdot D'=2$ and $\Theta\cdot E'=3$. 
 Summing up,  the 3 abelian  subvarieties $D,D'$ and $E'$ are distinct;   since an abelian subvariety is  determined by its tangent space at the origin,  $Z^+$, $Z^-$ and $W$ are pairwise distinct.
\end{proof}
The next result is the key ingredient of  the proof of fact (C):
\begin{lem}  \label{lem: key}
We have: 
$$\im \delta \cap H^1(T_{\bar Y})^+=0.$$
\end{lem}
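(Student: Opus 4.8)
The plan is to identify both $\im\delta$ and $H^1(T_{\bar Y})^+$ as explicit loci inside $\sym^2 Z$, and then to reduce the statement to the elementary fact that three distinct points of a conic are never collinear; the three distinct points will be supplied by Lemma~\ref{lem: WZ}.

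First I record that $\im\delta$ is a \emph{line}: by Lemma~\ref{lem: delta-gamma} the space $H^0(N_{\bar Y|\bar X})$ is $2$-dimensional and $\ker\delta$ is $1$-dimensional, so $\dim\im\delta=1$. Next, since $\bar Y$ has genus $2$ the multiplication map $\sym^2 H^0(K_{\bar Y})\to H^0(2K_{\bar Y})$ is an isomorphism, so infinitesimal Torelli provides a canonical $\iota$-equivariant isomorphism $H^1(T_{\bar Y})\cong\sym^2 Z$ sending a class $\xi$ to the symmetric form $\cup\xi\colon V\to Z=V^\vee$. Under the induced decomposition $\sym^2 Z=\sym^2 Z^+\oplus(Z^+\otimes Z^-)\oplus\sym^2 Z^-$, the $\iota$-invariant part is $H^1(T_{\bar Y})^+=\sym^2 Z^+\oplus\sym^2 Z^-$; projectively this is the chord of the Veronese conic $\mathcal{C}=\{[\ell^2]:\ell\in Z\}\subset\pp(\sym^2 Z)$ joining the two points $[Z^+]$ and $[Z^-]$.

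The crux is to show that $\im\delta$ is the pure square $\sym^2 W$, i.e.\ the single point $[W]\in\mathcal{C}$. For this I claim that every $\xi\in\im\delta$ is annihilated by cup product with $p^*H^0(K_E)$, where $p\colon\bar Y\to E$ is the degree-$3$ restriction of the Albanese map $\pi\colon\bar X\to E$. Indeed, for $\omega\in H^0(K_E)$ the assignment $\xi\mapsto(\cup\xi)(p^*\omega)$ is $H^1$ of the contraction $c_\omega\colon T_{\bar Y}\to\OO_{\bar Y}$ against the global $1$-form $p^*\omega$. Now $p^*\omega$ is the restriction of $\pi^*\omega\in H^0(\Omega^1_{\bar X})$, so $c_\omega$ extends to the contraction $\wt{c}_{\omega}\colon T_{\bar X}|_{\bar Y}\to\OO_{\bar Y}$ against $\pi^*\omega$; and a morphism out of the sub-bundle $T_{\bar Y}$ in \eqref{eq: seq4} that extends to $T_{\bar X}|_{\bar Y}$ annihilates the connecting homomorphism $\delta$. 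Hence $(\cup\xi)(p^*H^0(K_E))=0$ for all $\xi\in\im\delta$. Viewing $\cup\xi$ as a symmetric form on the $2$-dimensional space $V=H^0(K_{\bar Y})$, this says its radical contains the line $p^*H^0(K_E)$, so $\cup\xi\in\sym^2\bigl((p^*H^0(K_E))^{\perp}\bigr)=\sym^2 W$. As $\sym^2 W$ is $1$-dimensional and $\im\delta$ is a nonzero line contained in it, we conclude $\im\delta=\sym^2 W$, that is $\im\delta=\{[W^2]\}$ on $\mathcal{C}$.

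Finally I would conclude using Lemma~\ref{lem: WZ}, which asserts that $Z^+$, $Z^-$ and $W$ are pairwise distinct lines in $Z$. Their images $[Z^+]$, $[Z^-]$, $[W]$ are then three distinct points of the irreducible conic $\mathcal{C}$, hence not collinear; equivalently, the squares of three pairwise non-proportional vectors spanning $Z^+,Z^-,W$ are linearly independent in $\sym^2 Z$. Therefore $[W^2]=\im\delta$ does not lie on the chord $\pp(H^1(T_{\bar Y})^+)$, i.e.\ the line $\im\delta$ is not contained in $H^1(T_{\bar Y})^+$; since $\im\delta$ is $1$-dimensional this gives $\im\delta\cap H^1(T_{\bar Y})^+=0$, as desired. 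I expect the genuine content to be concentrated in the third paragraph, specifically the vanishing $(\cup\xi)(p^*H^0(K_E))=0$ for $\xi\in\im\delta$: the clean mechanism is that contraction against the \emph{ambient} global $1$-form $\pi^*\omega$ splits $\delta$ off, but one must verify both that this contraction genuinely extends over $T_{\bar X}|_{\bar Y}$ and that $H^1$ of $c_\omega$ computes cup product with $p^*\omega$. Once this is in place, the reduction via infinitesimal Torelli to the non-collinearity of three points on a conic (with distinctness furnished by Lemma~\ref{lem: WZ}) is routine.
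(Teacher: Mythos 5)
Your proof is correct, and it reaches the key containment by a genuinely different mechanism than the paper. Both arguments share the same outer skeleton: the $\iota$-equivariant identification $H^1(T_{\bar Y})\cong \sym^2 Z$ (infinitesimal Torelli for genus $2$), the resulting description $H^1(T_{\bar Y})^+=\sym^2 Z^+\oplus\sym^2 Z^-$, the input of Lemma~\ref{lem: WZ}, and the closing linear algebra (squares of three pairwise non-proportional vectors in a $2$-dimensional space are independent). The difference is how $\im\delta$ is located. The paper first uses Lemma~\ref{lem: delta-gamma} to identify $\im\delta$ as the span of the first-order deformation $\xi$ of $\bar Y$ moving in the pencil $|\bar Y|$, then transports $\xi$ by the differential ${}^t\mu$ of the Torelli map into $H^1(T_J)=Z\otimes Z$, $J=\Jac(\bar Y)$, and invokes deformation theory of the map $\psi=p_*\colon J\to E$ with fixed target (Sernesi, Thm.~3.4.8) to conclude ${}^t\mu(\xi)\in H^1(T_{J/E})=W\otimes Z$. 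You instead never leave $\bar Y$: contraction against $p^*\omega$, $\omega\in H^0(K_E)$, extends over $T_{\bar X}|_{\bar Y}$ because $p^*\omega$ is the restriction of the ambient form $\pi^*\omega$, and a functional on the sub-bundle of \eqref{eq: seq4} that extends to the middle term kills the image of the connecting homomorphism (since $H^1(c_\omega)\circ\delta = H^1(\wt c_\omega)\circ H^1(i)\circ\delta$ and $H^1(i)\circ\delta=0$ by exactness); both verifications you flagged as needed are indeed standard and hold. This gives $\im\delta\subseteq\sym^2 W$, which is sharper than the paper's constraint (a $1$-dimensional space rather than the $2$-dimensional $W\otimes Z$) and lighter: no Jacobian of $\bar Y$, no Torelli map, no appeal to Sernesi. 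In fact Lemma~\ref{lem: delta-gamma} becomes inessential in your argument, since the containment $\im\delta\subseteq\sym^2 W$ together with $\sym^2 W\cap H^1(T_{\bar Y})^+=0$ already yields the conclusion without knowing $\dim\im\delta$; you only use it for the (unneeded, though correct) upgrade to equality $\im\delta=\sym^2 W$. What the paper's route buys is the geometric interpretation—the pencil deformation moves $J$ keeping the target of $\psi$ fixed—which matches the paper's stated interest in the interplay between cohomology and geometry; what yours buys is a self-contained, purely cohomological argument on the curve.
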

\begin{proof}
The space $H^1(T_{\bar Y})$ is Serre dual to $H^0(2K_{\bar Y})$. Since $\bar Y$ has genus 2, the multiplication map $\mu \colon H^0(K_{\bar Y})\otimes H^0(K_{\bar Y})\to H^0(2K_{\bar Y})$  induces an isomorphism $\rho\colon S^2H^0(K_{\bar Y})\to H^0(2K_{\bar Y})$. Via these identifications, we have $H^1(T_{\bar Y})^+=\left(Z^+\otimes Z^+\right)\oplus \left(Z^-\otimes Z^-\right)$. Also, there is an isomorphism  $H^1(T_J)\cong Z\otimes Z$ and the dual map $^t\mu\colon H^1(T_{\bar Y})\to Z\otimes Z$ is the differential at $[\bar Y]$ of the Torelli map, sending a curve of genus 2 to its  Jacobian. 

To simplify the notation in what follows we set $\psi:=p_*\colon J\to \Jac(E)\cong E$.  The differentials sequence  $0\to T_{J/E}\to T_J\to \psi^*T_E\to 0$ can be rewritten more explicitly as: 
\begin{equation}
0\to W\otimes \OO_J\to Z\otimes \OO_J\to W'\otimes\OO_J\to 0,
\end{equation}
where $W'=H^0(K_E)^{\vee}$ is the tangent space to $E$ at the origin. 
Following the notation of \cite[\S~3.4.2]{sernesi},  we denote by $\Def_{\psi/E}$  the deformations with fixed target of the map $\psi \colon J\to E$. By {\em ibid.}, Thm.~3.4.8 and Lem.~3.4.7, (iv), the tangent space to $\Def_{\psi/E}$ is $H^1(T_{J/E})=W\otimes Z$; moreover the map $H^1(T_{J/E})\to H^1(T_J)$ is clearly an inclusion. 

By Lemma \ref{lem: delta-gamma},  the image of $\delta$ is $\delta(\ker \gamma)$, namely it is generated by the first order deformation $\xi$ of $\bar Y$ obtained by letting $\bar Y$ vary in the linear pencil $|\bar Y|$ of $\bar X$.   The element $^t\mu(\xi)$ is the corresponding first order deformation of $J$ and, since $\xi$ induces  a first order deformation  of $\psi$ with fixed target,  by the above discussion it lies in $H^1(T_{J/E})=W\otimes Z$. Using Lemma \ref{lem: WZ}, it is  an easy linear algebra exercise to show that the subspaces $H^1(T_{\bar Y})^+=\left(Z^+\otimes Z^+\right)\oplus \left(Z^-\otimes Z^-\right)$ and $H^1(T_{J/E})=W\otimes Z$ of $H^1(T_J)=Z\otimes Z$ intersect only in 0.

\end{proof}
\subsection{Conclusion}\label{ssec: conclusion}
We are finally ready complete the proofs.
\begin{proof}[Proof of Theorem \ref{thm: cohomological computation}]
Claims (A) and (B) are proven in Lemma \ref{lem: cohomology} and Lemma \ref{lem: T1}, so we only have to prove claim (C).
We recall first  some facts from \cite{surfdef}. By  \cite[Thm.~5.1]{surfdef}   there is  a natural injective map $\alpha\colon  T_X\to f_*T_{\bar X}$ which is an isomorphism on  the smooth locus of $X$. Let $\mathcal G$ be the sheaf  defined by the short exact sequence:
\begin{equation}\label{eq: seq1}
0\to T_X\overset{\alpha}{\to} f_*T_{\bar X}\to \mathcal G\to 0. 
\end{equation} 
The map  $\alpha$ is an isomorphism on the smooth locus of $X$, so the sheaf $\mathcal G$ is supported on $Y$. By the same Theorem,  there is an exact sequence 
\begin{equation}\label{eq: seq2}
0\to (g_*T_{\bar  Y})^+\to g_*T_{\bar  X}|_{\bar  Y}\to \mathcal G\to 0.
\end{equation}
Since $H^2(f_*T_{\bar X})=H^2(\bar X, T_{\bar X})=0$ by Lemma \ref{lem: cohomology}, by \eqref{eq: seq1} it is enough to show that $h^1(\mathcal G)=0$ or, equivalently, 
 that the map  $j\colon H^1((g_*T_{\bar  Y})^+)  \to H^1( g_*T_{\bar  X}|_{\bar  Y})$ is surjective. Since $g$ is a finite map, we can make identifications  $H^1((g_*T_{\bar  Y})^+)\cong H^1(T_{\bar Y})^+$ and $H^1( g_*T_{\bar  X}|_{\bar  Y})\cong H^1(T_{\bar X}|_{\bar Y})$ and work on $\bar Y$.
  
 Taking cohomology in \eqref{eq:  seq4}
 we get:
 \begin{eqnarray}\label{eq: long}
 0\to H^0(T_{\bar X}|_{\bar Y})\to H^0(N_{\bar Y|\bar X})\overset{\delta}{\to} H^1(T_{\bar Y})\overset{j_0}{\to}  H^1(T_{\bar X}|_{\bar Y})\to 0.
  \end{eqnarray}
 So the map $j$ is just the restriction to $H^1(T_{\bar Y})^+$ of the map $j_0$ in \eqref{eq: long}; since both $H^1(T_{\bar Y})^+$ and $H^1(T_{\bar X}|_{\bar Y})$  have dimension two,  $j$ is surjective iff it is an isomorphism iff the kernel of $j_0$, that is the  image of $\delta$, intersects $H^1(T_{\bar Y})^+$ only in zero. This last statement is precisely the content of Lemma \ref{lem: key}, so fact (C) is proven. 
\end{proof}

\begin{proof}[Proof of Theorem \ref{thm: main}]
By Theorem \ref{thm: cohomological computation} the assumptions of Theorem \ref{thm: tziolas} are satisfied, so $X$ is formally smoothable. Since $\omega_X$ is ample, or since $H^2(\OO_X)=0$,  Theorem \ref{thm: nobile} applies and therefore  $X$ is geometrically smoothable and  claim (ii) is proven.  Furthermore,  by Remark \ref{rem: smooth} the stack $\overline M_X$ is smooth at $[X]$ of dimension equal to $\dim \Ext^1(\Omega_X,\OO_X)$. 

To complete the proof we need to show  that $\Ext^1(\Omega_X,\OO_X)$ has dimension 8. 
Since $H^2(T_X)=0$, by  the local-to-global exact sequence for $\Ext$ 
we have 
 \begin{gather}
0\to H^1(T_X)\to \Ext^1(\Omega_X,\OO_X)\to H^0(\mathcal T^1_X) \to 0 \nonumber 
\end{gather}
hence 
 $\dim \Ext^1(\Omega_X,\OO_X)=h^1(T_X)+h^0(\mathcal T^1_X)=h^1(T_X)+7$, where the last equality follows by Lemma \ref{lem: T1}.  So we have to prove $h^1(T_X)=1$. Again by the vanishing of $H^2(T_X)$  we have $h^1(T_X)=h^0(T_X)-\chi(T_X)=-\chi(T_X)$, since $H^0(T_X)$ is the tangent space at the origin of $\Aut(X)$ and $\Aut(X)$ is finite because $X$ is stable
 (cf. \cite{BHPS}, Lemma 2.5).
 Finally, sequences \eqref{eq: seq2} and \eqref{eq: seq1} give  
\begin{gather*} \chi(T_X)=\chi(T_{\bar X})-\chi(\mathcal G)=\\
\chi(T_{\bar X})-\chi(T_{\bar X}|_{\bar Y})+\chi((g_*T_{\bar Y})^+)=0-(-1)-2=-1
\end{gather*}
where the last equality follows by Lemma \ref{lem: cohomology} and by observing that $\chi((g_*T_{\bar Y})^+)=-\dim H^1(T_{\bar Y})^+=-2$ (see the proof of Lemma \ref{lem: key}).
\end{proof}

      \hfill\break
Barbara Fantechi \\
SISSA  \\
 Via Bonomea 265, I-34136 Trieste (Italy)\\
{\tt fantechi@sissa.it}

  \vspace{.2cm}
   \hfill\break
Marco Franciosi\\
Dipartimento di Matematica, Universit\`a di Pisa\\
Largo B.~Pontecorvo 5,  I-56127 Pisa (Italy)\\
{\tt marco.franciosi@unipi.it}

\vspace{.2cm}

 \hfill\break
Rita Pardini\\
Dipartimento di Matematica, Universit\`a di Pisa\\
Largo B.~Pontecorvo 5,  I-56127 Pisa (Italy)\\
{\tt rita.pardini@unipi.it}

     \end{document}